\newtheorem{theorem}{Theorem}[]
\newtheorem{corollary}{Corollary}[section]
\newtheorem{lemma}[]{Lemma}
\newtheorem{proposition}{Proposition}[section]
\newtheorem{assumption}{Assumption}[]
\newcolumntype{L}{>{\centering\arraybackslash}m{1.1cm}}
\title{Zeroth-Order Randomized Subspace Newton Methods}
\name{Erik Berglund, Sarit Khirirat, Xiaoyu Wang\thanks{This work was supported partially by the Swedish Foundation for Strategic Research (SSF) project SoPhy and the Wallenberg Artificial Intelligence,
Autonomous Systems and Software Program (WASP) funded by Knut
and Alice Wallenberg Foundation. Emails: \texttt{erbergl@kth.se, sarit@kth.se, wang10@kth.se}. © 2022 IEEE.  Personal use of this material is permitted. Permission from IEEE must be obtained for all other uses, in any current or future media, including reprinting/republishing this material for advertising or promotional purposes, creating new collective works, for resale or redistribution to servers or lists, or reuse of any copyrighted component of this work in other works.}}
\address{KTH Royal Institute of Technology}
\begin{document}
%
\maketitle
\begin{abstract}
Zeroth-order methods have become important tools for solving problems where we have access only to function evaluations. However, the zeroth-order methods only using gradient approximations are $n$ times slower than classical first-order methods for solving $n$-dimensional problems. To accelerate the convergence rate, this paper proposes the zeroth order randomized subspace Newton (ZO-RSN) method, which estimates projections of the gradient and Hessian by random sketching and finite differences. This allows us to compute the Newton step in a lower dimensional subspace, with small computational costs. We prove that ZO-RSN can attain lower iteration complexity than existing zeroth order methods for strongly convex problems. Our numerical experiments show that ZO-RSN can perform black-box attacks under a more restrictive limit on the number of function queries than the state-of-the-art Hessian-aware zeroth-order method.
\end{abstract}
\begin{keywords}
Zeroth-order optimization, sketching techniques, Newton-type method, adversarial black-box attacks, convolutional neural network. 
\end{keywords}
\section{Introduction}
\label{sec:intro}

Several applications in machine learning, signal processing and communication networks can often be cast into
optimization 
problems,
where gradients are difficult or even infeasible to compute. 
%
Popular application examples 
include optimal hyper-parameter tuning for learning models \cite{snoek2012practical,bergstra2011algorithms}, black-box adversarial attacks on neural network models \cite{chen2017zoo,papernot2017practical,hu2017generating,ilyas2018black} and sensor selection problems in smart grids or wireless networks \cite{liu2016sensor,hero2011sensor,liu2018zeroth}.
%
%
This motivates the study of the zeroth-order methods. A prominent type of zeroth order methods uses function value differences to estimate the gradients \cite[Section 3.4]{book}. 
However, these methods  
are much slower 
%
%
%
than classical gradient descent 
~\cite{Nesterov2017}, and also 
suffers from poor performance particularly for ill-conditioned problems. An alternative way to improve their performance is to incorporate the second order information into zeroth-order methods. 
However, computing the full Hessian matrix can heavily increase the number of function evaluations and make the Newton step hard to compute, 
especially for high-dimensional problems.
This necessitates us to approximate the Hessian matrix in a lower-dimensional subspace. 

Ye \emph{et al.}\cite{YeHaishan2018HZOf} developed  the Hessian-aware zeroth order (ZOHA) methods, which integrate Hessian information into zeroth-order methods. 
%
The power-iteration based method ZOHA-PW 
has a lower query complexity than the gradient-estimating method by 
\cite{Nesterov2017} when the eigenvalues of the Hessian decay sufficiently quickly. However, the power iteration method requires $O(n)$ function queries per iteration for $n$-dimensional problems, which is expensive when $n$ is large.
To decrease the query cost, they proposed the heuristic methods ZOHA-Gauss-DC and ZOHA-Diag-DC, which estimate the Hessian based on a limited number of random directions. However, no complexity bounds are provided for them. 

%
%

%

Another approach to reduce the times of computing Hessian information for high-dimensional problems
is to use randomized sketching techniques \cite{tang2017gradient,gower2019rsn,pilanci2017newton}. 
These sketching techniques 
construct lower dimensional sub-problems, which can be solved within small computation times,
%
and enable classical optimization algorithms to have better scalability.
For instance, a randomized subspace newton (RSN) method \cite{gower2019rsn} exploits the sketching techniques on the Newton method to solve the problems with very large dimension and to achieve accelerated convergence rate.
  
In this paper, we propose Hessian-based zeroth-order algorithms using sketching techniques for huge-dimensional problems, called zeroth-order RSN (ZO-RSN).
The methods exploit finite differences and sketching to approximate projections of the gradient and Hessian. 
%
%
%
We provide complexity bounds and prove that under certain conditions ZO-RSN attains lower query complexity than existing zeroth-order algorithms for strongly convex problems. 
Finally, our experiments with black-box attack problems on a convolutional neural network 
show that ZO-RSN has an overall competitive performance and higher success rate, compared to the ZOHA-Gauss-DC method in \cite{YeHaishan2018HZOf}.

\subsection{Notation}
 For $x\in\mathbb{R}^n$ and $M \succ 0$, $\|x\|_2$ and $\|x\|_\infty$ are the $\ell_2$ and $\ell_\infty$ norm, respectively, and $\|x\|_M^2 = x^T M x$. 
%
%
Given the sketching matrix $S\in\mathbb{R}^{n\times m}$, $s_1,s_2,\ldots,s_m\in\mathbb{R}^n$ are its columns. 
For $f:\mathbb{R}^n\rightarrow\mathbb{R}$, $g(x)=\nabla f(x)$ and $H(x)= \nabla^2 f(x)$ are its gradient and Hessian. 
The function $f(x)$ is $L$-Lipschitz continuous if there exists a positive constant $L$ such that
$
\| f(y)  - f(x)\|_2 \leq L\|y-x\|_2 \text{ for all } x,y\in\mathbb{R}^n,
$
and $\mu$-strongly convex if there exists a positive constant $\mu$ such that
$
f(y) \geq f(x) + \langle \nabla f(x) , y-x \rangle + ({\mu}/{2})\| y-x\|^2_2 \text{ for all } x,y\in\mathbb{R}^n.
$
We also state that the differentiable function $f(x)$ is $L_s$-smooth if its gradient $g(x)$ is $L_s$-Lipschitz continuous. Finally, for any $y \in \mathbb{R}^{n}$, $\Delta_y f(x) = f(x+y)-f(x)$.

\section{Problem Formulation}\label{sec:problem_form}

We consider the  unconstrained optimization problem 
\begin{equation}
\begin{aligned}
& \underset{x \in \mathbb{R}^n}{\text{minimize}}
& & f(x),
\end{aligned} \label{eqn:unconstrained_problem}
\end{equation}
where the dimension $n$ could be very large. 
Here, $f(x)$ is a three times differentiable and $\mu$-strongly convex function,
which is bounded from below and has its minimum value $f^{\ast}$ at the point $x^\ast$. 
\textcolor{black}{$g(x)$ and $H(x)$ are also $L_1$- and $L_2$-Lipschitz continuous.}
To facilitate the analysis, we further make the following standard assumption on $f(x)$. 
\begin{assumption}[\cite{gower2019rsn,praneeth2018global}]
\label{asm:rel_cvx_smth}
There exists $\hat L \geq \hat \mu > 0$ such that for any $x,y\in \mathbb{R}^n$:
\begin{equation}\label{eqn:lhatbound}
    f(x) \leq f(y) + g(y)^T(x-y) + ({\hat L}/{2}) \|x-y\|_{H(y)}^2, 
\end{equation}
\begin{equation}\label{eqn:muhatbound}
    f(x) \geq f(y) + g(y)^T(x-y) + ({\hat \mu}/{2}) \|x-y\|_{H(y)}^2.
\end{equation}
\end{assumption}
Assumption \ref{asm:rel_cvx_smth} states the smoothness and strong convexity of $f(x)$ under the norm weighted by its Hessian $\| \cdot \|_{H(x)}$.
%
Also, the $\hat L$-relative smoothness and $\hat \mu$-relative convexity exist as a result of the $L_1$-smoothness and $\mu$-strong convexity  assumption on $f(x)$, as shown below: 
\begin{proposition}[\cite{gower2019rsn,praneeth2018global}]\label{prop:c_stab}
    A function $f(x)$ is $c$-stable on a domain $D$ if $\forall y,z  \in D$, $\|z - y\|_{H(y)}^2$ and there exists a constant $c \geq 1$ such that
   $
        c = {\|z - y\|_{H(z)}^2}/{\|z - y\|_{H(y)}^2}.
  $
    If $f(x)$ is $\mu$-strongly convex and $L_1$-smooth, then $f$ is $(L_1/\mu)$-stable. Furthermore, if $f(x)$ is $c$-stable, then Assumption~\ref{asm:rel_cvx_smth}
holds with $\hat L \leq c$ and $\hat \mu \geq {1}/{c}$.
\end{proposition}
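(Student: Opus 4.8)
The plan is to establish the two assertions separately by elementary arguments: the first from the second-order characterizations of $\mu$-strong convexity and $L_1$-smoothness, and the second from an exact second-order Taylor expansion combined with the definition of $c$-stability (read as the one-sided bound $\|z-y\|_{H(z)}^2 \le c\,\|z-y\|_{H(y)}^2$ for all $y,z$ in the domain). For the first assertion, I would use that twice differentiability together with $\mu$-strong convexity and $L_1$-smoothness gives $\mu I \preceq H(x) \preceq L_1 I$ for every $x$. Hence, for any $y,z\in\mathbb{R}^n$ and $v:=z-y\neq 0$, $\|v\|_{H(z)}^2 = v^T H(z) v \le L_1\|v\|_2^2$ while $\|v\|_{H(y)}^2 = v^T H(y) v \ge \mu\|v\|_2^2$, so the ratio is at most $L_1/\mu$; the case $v=0$ is vacuous. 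Thus $f$ is $(L_1/\mu)$-stable.

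For the second assertion, fix $x,y\in\mathbb{R}^n$, write $z_t := y + t(x-y)$ for $t\in[0,1]$, and use the integral-remainder Taylor formula
\begin{equation*}
f(x) = f(y) + g(y)^T(x-y) + \int_0^1 (1-t)\,(x-y)^T H(z_t)\,(x-y)\,dt,
\end{equation*}
which is valid since $f$ is (three times, hence twice continuously) differentiable. The key observation is that $z_t - y = t(x-y)$ is parallel to $x-y$, so applying $c$-stability to the pair $(y,z_t)$ gives $t^2 (x-y)^T H(z_t)(x-y) \le c\,t^2 (x-y)^T H(y)(x-y)$, hence $(x-y)^T H(z_t)(x-y) \le c\,\|x-y\|_{H(y)}^2$ for all $t\in(0,1]$ (and trivially at $t=0$); applying $c$-stability instead to the pair $(z_t,y)$ yields the reverse bound $(x-y)^T H(z_t)(x-y) \ge (1/c)\,\|x-y\|_{H(y)}^2$. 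Substituting these two estimates into the integral and using $\int_0^1(1-t)\,dt = 1/2$ produces exactly \eqref{eqn:lhatbound} with constant $c$ and \eqref{eqn:muhatbound} with constant $1/c$; since Assumption~\ref{asm:rel_cvx_smth} only requires \emph{some} admissible pair $\hat L\ge\hat\mu>0$, this shows it holds with $\hat L\le c$ and $\hat\mu\ge 1/c$.

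I expect no genuine obstacle — the statement is quoted from \cite{gower2019rsn,praneeth2018global} — but two small points deserve attention when writing it up cleanly. First, the stated definition of $c$-stability is typographically garbled and should be interpreted as the one-sided inequality above holding for all $y,z$ in the domain. Second, the Taylor argument uses $c$-stability along the whole segment $[x,y]$, so strictly one needs a convex domain $D$; here $D=\mathbb{R}^n$, which is automatic and consistent with Assumption~\ref{asm:rel_cvx_smth} being posed on all of $\mathbb{R}^n$. The degenerate direction $x=y$ makes all the relevant inequalities trivial.
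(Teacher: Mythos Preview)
The paper does not supply its own proof of this proposition: it is stated with citations to \cite{gower2019rsn,praneeth2018global} and never revisited in the appendix. So there is nothing to compare against beyond noting that your argument is the standard one underlying those references.

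Your proof is correct. The first assertion follows immediately from the eigenvalue sandwich $\mu I \preceq H(\cdot) \preceq L_1 I$, exactly as you wrote. For the second assertion, the integral-remainder Taylor expansion combined with the observation that $z_t - y$ is parallel to $x-y$ is precisely the right tool: it lets you apply the $c$-stability inequality pointwise along the segment and then integrate against $(1-t)$ to recover the $\tfrac{1}{2}$ factor in Assumption~\ref{asm:rel_cvx_smth}. Your two caveats (the garbled definition should be read as the one-sided inequality, and the argument needs a convex domain) are well taken and consistent with how the cited sources state and use the result.
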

%

\subsection{RSN Methods} 
The randomized subspace Newton (RSN) method \cite{gower2019rsn} is a popular inexact Newton method for solving huge-dimensional problems. 
This method solves an exact Newton system restricted to a random subspace. 
Given a fixed step-size  $\gamma >0$ and an initial point $x_0\in\mathbb{R}^d$,
%
the iterate $x_k$ of the RSN method is updated via: \vspace{-0.1cm}
\begin{equation}\label{eqn:updaterule}
    x_{k+1} = x_k + \gamma S_k \lambda_k, ~ 
    ~ S_k^TH(x_k)S_k \lambda_k = - S_k^T g(x_k),
\end{equation}
where 
$S_k\in\mathbb{R}^{n\times m}$ stores $m$ vectors that span the randomly selected subspace of $\mathbb{R}^n$. 
The next lemma characterizes the decrease in the function value from the ZO-RSN method \eqref{eqn:updaterule}.
%
\begin{lemma}\label{lemma:RSNDescent}
Consider the RSN method \eqref{eqn:updaterule} for solving Problem \eqref{eqn:unconstrained_problem}. If $\gamma \leq 1/\hat L$, then 
\begin{equation}\label{eqn:RSNbound}
f(x_{k+1}) \leq f(x_k) - ({\gamma}/{2}) \|g(x_k)\|^2_{S_k(S_k^TH(x_k)S_k)^{\dagger}S_k^T}.
\end{equation}
\end{lemma}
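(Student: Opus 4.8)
The plan is to combine the relative-smoothness upper bound in Assumption~\ref{asm:rel_cvx_smth} with the defining equation of the subspace Newton step, exactly in the spirit of the RSN analysis. First I would apply \eqref{eqn:lhatbound} with $y=x_k$ and $x=x_{k+1}=x_k+\gamma S_k\lambda_k$, so that $x-y=\gamma S_k\lambda_k$ and
\begin{equation*}
f(x_{k+1}) \le f(x_k) + \gamma\, g(x_k)^T S_k\lambda_k + \frac{\hat L\gamma^2}{2}\,\lambda_k^T S_k^T H(x_k) S_k\lambda_k .
\end{equation*}
Writing $W_k := S_k^T H(x_k) S_k \succeq 0$ and using the second relation in \eqref{eqn:updaterule}, $W_k\lambda_k=-S_k^T g(x_k)$, the linear term becomes $g(x_k)^T S_k\lambda_k=(S_k^T g(x_k))^T\lambda_k=-\lambda_k^T W_k\lambda_k$. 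Hence, setting $Q_k:=\lambda_k^T W_k\lambda_k\ge 0$,
\begin{equation*}
f(x_{k+1}) \le f(x_k) - \gamma Q_k + \frac{\hat L\gamma^2}{2}Q_k = f(x_k) - \gamma\Bigl(1-\frac{\hat L\gamma}{2}\Bigr)Q_k \le f(x_k) - \frac{\gamma}{2}Q_k,
\end{equation*}
where the last step uses $\gamma\le 1/\hat L$, which gives $1-\hat L\gamma/2\ge 1/2$, together with $Q_k\ge 0$.

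It then remains to identify $Q_k$ with the weighted-norm quantity appearing in \eqref{eqn:RSNbound}. Since $H(x_k)\succ 0$, factoring $H(x_k)=H(x_k)^{1/2}H(x_k)^{1/2}$ shows $W_k=(H(x_k)^{1/2}S_k)^T(H(x_k)^{1/2}S_k)$, so the column spaces of $W_k$ and $S_k^T$ coincide; in particular $-S_k^T g(x_k)$ lies in the range of $W_k$, the Newton subsystem is consistent, and the update is well defined. Taking the minimum-norm solution $\lambda_k=-W_k^{\dagger}S_k^T g(x_k)$ and using that $W_k$ (hence $W_k^{\dagger}$) is symmetric with $W_k^{\dagger}W_k W_k^{\dagger}=W_k^{\dagger}$, I would compute
\begin{equation*}
Q_k = \lambda_k^T W_k\lambda_k = g(x_k)^T S_k W_k^{\dagger}W_k W_k^{\dagger}S_k^T g(x_k) = g(x_k)^T S_k\bigl(S_k^T H(x_k) S_k\bigr)^{\dagger}S_k^T g(x_k) = \|g(x_k)\|^2_{S_k(S_k^T H(x_k) S_k)^{\dagger}S_k^T},
\end{equation*}
which is precisely \eqref{eqn:RSNbound}. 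One should also note that any other solution of $W_k\lambda_k=-S_k^T g(x_k)$ differs from $\lambda_k$ by an element of $\ker W_k$ and therefore gives the same $Q_k$, so the bound does not depend on which solution the algorithm returns.

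I do not expect a genuine obstacle: the argument is essentially a one-step descent estimate. The only points that require care are the pseudoinverse bookkeeping—verifying consistency of the subspace Newton system (so that $Q_k$ and the update are well defined), the well-definedness of $Q_k$ independently of the chosen $\lambda_k$, and the algebraic identity $\lambda_k^T W_k\lambda_k = g(x_k)^T S_k W_k^{\dagger}S_k^T g(x_k)$—and matching the constant $\hat L$ of Assumption~\ref{asm:rel_cvx_smth} with the step-size threshold $\gamma\le 1/\hat L$, which holds by construction.
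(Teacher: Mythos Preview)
Your proof is correct and follows the same approach as the paper: apply the relative-smoothness bound \eqref{eqn:lhatbound} at $y=x_k$, $x=x_{k+1}$, substitute the RSN step, and simplify. The only cosmetic difference is that the paper first relaxes the quadratic coefficient from $\hat L/2$ to $1/(2\gamma)$ (using $\gamma\le 1/\hat L$) before plugging in the update, whereas you carry $\hat L$ through and bound $1-\hat L\gamma/2\ge 1/2$ at the end; your additional pseudoinverse bookkeeping is extra care the paper omits.
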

This descent lemma for the RSN method can be used to prove its linear convergence toward the exact optimum \cite{gower2019rsn}.
%
%
%
%
%
Furthermore, to implement the RSN method $S_k^TH(x_k)S_k$ and $S_k^T g(x_k)$ are computed efficiently by various sketching techniques such as sub-Gaussian sketches, randomized orthonormal system sketches, random sampling sketches and the Iterative Hessian Sketch \cite{10.5555/2946645.2946698} as well as the fast Johnson-Lindenstrauss sketch for problems with the appropriate structure \cite{doi:10.1137/060673096}. 
These sketching techniques allow for computing $\lambda_k$ with very small linear equation systems. If $m \ll n$, then $\lambda_k$ in Eq. \eqref{eqn:updaterule} can be solved quickly by inverting $S_k^TH(x_k)S_k \in \mathbb{R}^{m\times m}$. 
%

\section{Zeroth-order RSN Methods}
In this section, we introduce the zeroth-order randomized subspace Newton (ZO-RSN) method, which builds on the RSN method. 
The iterate $x_k$ of the ZO-RSN algorithm is updated according to:
\begin{align}
    x_{k+1} = x_k + \gamma  S_k \tilde \lambda_k, ~~ \text{and} ~~
    \tilde{H}_{S_k}(x_k)   \tilde{\lambda}_k = - \tilde{g}_{S_k}(x_k).\label{eqn:updaterule_ZO} 
\end{align}
Here $\tilde{g}_{S_k} (x_k)$ and  $\tilde{H}_{S_k} (x_k)$ are approximations of the sketched gradient and Hessian respectively. For a positive scalar $\alpha$, they can be computed via: 
%
\begin{align*}
     [\tilde{g}_{S_k}(x_k)]_i 
      &:= {\Delta_{\alpha s_{i,k}}f(x_k)}/{\alpha} \approx s_{i,k}^T g(x_k), 
      \intertext{and }
      [\Tilde{H}_{S_k}(x_k)]_{i,j} &: = \Delta_{\alpha s_{i,k}}\Delta_{\alpha s_{j,k}} f(x_k)/{\alpha^2}  \approx s_{i,k}^T H(x_k) s_{j,k}, 
\end{align*}
for all $i=1,\ldots,m$.
Similarly to Lemma \ref{lemma:RSNDescent}, the ZO-RSN method can be proved to achieve the following bound: \begin{equation}\label{eqn:RSNbound_FD}
  \hspace{-0.5em}  f(x_{k+1}) \leq f(x_k) - \frac{\gamma}{2} \|g(x_k)\|^2_{S_k(S_k^TH(x_k)S_k)^{\dagger}S_k^T} + O(\alpha).
\end{equation}
This ensures function value improvement in Eq. \eqref{eqn:RSNbound_FD} if $\alpha$ is sufficiently small and $\tilde{H}_{S_k}(x_k)$ is positive definite. In fact, we can ensure that positive definiteness of $\tilde{H}_{S_k}(x_k)$ follows from $\alpha$ being small enough if we choose $S_k$ appropriately.
\begin{lemma}\label{lemma:PD_H_tilde}
%
If $S_k^TS_k = I$ and  $\|\tilde{H}_{S_k}(x_k) - S_k^T H(x_k) S_k \|_2 < \mu$, then $\tilde{H}_{S_k}(x_k) \succ 0$.
\end{lemma}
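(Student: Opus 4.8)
The plan is to reduce the claim to an eigenvalue perturbation argument. First I would record that $\tilde{H}_{S_k}(x_k)$ is symmetric, so that it makes sense to speak of its eigenvalues and of positive definiteness. This follows directly from the definition $[\tilde{H}_{S_k}(x_k)]_{i,j} = \Delta_{\alpha s_{i,k}}\Delta_{\alpha s_{j,k}} f(x_k)/\alpha^2$ together with the identity
\begin{equation*}
\Delta_{u}\Delta_{v} f(x) = f(x+u+v) - f(x+u) - f(x+v) + f(x),
\end{equation*}
whose right-hand side is symmetric in $u$ and $v$; hence $[\tilde{H}_{S_k}(x_k)]_{i,j} = [\tilde{H}_{S_k}(x_k)]_{j,i}$.

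Next I would lower-bound the spectrum of the exact sketched Hessian $S_k^T H(x_k) S_k$. Since $f$ is $\mu$-strongly convex and three times differentiable, $H(x_k) \succeq \mu I$; multiplying on the left by $S_k^T$ and on the right by $S_k$ gives $S_k^T H(x_k) S_k \succeq \mu\, S_k^T S_k = \mu I$, where the last equality uses the hypothesis $S_k^T S_k = I$. Consequently $\lambda_{\min}\big(S_k^T H(x_k) S_k\big) \geq \mu$.

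Finally I would invoke Weyl's inequality for symmetric matrices: for symmetric $A,B$ one has $\lambda_{\min}(A) \geq \lambda_{\min}(B) - \|A - B\|_2$. Applying this with $A = \tilde{H}_{S_k}(x_k)$ and $B = S_k^T H(x_k) S_k$, and using the hypothesis $\|\tilde{H}_{S_k}(x_k) - S_k^T H(x_k) S_k\|_2 < \mu$, yields
\begin{equation*}
\lambda_{\min}\big(\tilde{H}_{S_k}(x_k)\big) \;\geq\; \lambda_{\min}\big(S_k^T H(x_k) S_k\big) - \big\|\tilde{H}_{S_k}(x_k) - S_k^T H(x_k) S_k\big\|_2 \;>\; \mu - \mu \;=\; 0,
\end{equation*}
so $\tilde{H}_{S_k}(x_k) \succ 0$.

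I do not expect a serious obstacle here: the argument is essentially a one-line perturbation bound once symmetry and the $\mu I$ lower bound are in place. The only points requiring a little care are (i) confirming the symmetry of the finite-difference Hessian estimator so that eigenvalue language is legitimate, and (ii) being explicit that $S_k^T S_k = I$ is exactly what converts $H(x_k)\succeq \mu I$ into $S_k^T H(x_k) S_k \succeq \mu I$ (without orthonormal columns one would only get a bound scaled by $\lambda_{\min}(S_k^T S_k)$, which could be zero).
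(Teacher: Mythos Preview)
Your proof is correct, but it takes a different route from the paper. The paper first establishes $S_k^TH(x_k)S_k\succeq \mu I$ (just as you do), then argues multiplicatively: it writes
\[
\tilde{H}_{S_k}(x_k)(S_k^TH(x_k)S_k)^{-1}=I+(\tilde{H}_{S_k}(x_k)-S_k^TH(x_k)S_k)(S_k^TH(x_k)S_k)^{-1},
\]
bounds the norm of the second term by $\|\tilde{H}_{S_k}(x_k)-S_k^TH(x_k)S_k\|_2/\mu<1$, and concludes that all eigenvalues of the product are positive, hence $\tilde{H}_{S_k}(x_k)\succ 0$. Your additive approach via Weyl's inequality is more direct and arguably more elementary; it also makes explicit the symmetry of the finite-difference Hessian estimator, which the paper uses implicitly but never states. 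The paper's multiplicative viewpoint, on the other hand, dovetails naturally with the perturbation Lemma~\ref{lemma:perturbation} it invokes later for $\tilde\lambda_k-\lambda_k$, so it fits the surrounding analysis stylistically. Either argument yields the same quantitative threshold $\mu$.
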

%
%
Based on this lemma, we set $S_k^TS_k = I$ to ensure that $\tilde{H}_{S_k}(x_k) \succ 0$. We also require $\mathbb{E}[S_kS_k^T] \succ 0$ so that the approximate sketching does not leave out any directions throughout every iteration.
This requirement can be easily satisfied if $s_{1,k},...,s_{m,k}$ are sampled from unit coordinate directions without replacement.


\section{Theoretical results}
\label{sec:convergence_analysis}

We now provide a complexity bound for ZO-RSN methods. 
%
%
%
%
%
\begin{theorem}\label{thm:Convergence}
Let the sketching matrix $S_k\in\mathbb{R}^{n\times m}$ satisfy $S_k^T S_k = I$ and $\mathbb{E}_{S_k \sim D}[S_kS_k^T] \succ 0$, and define $G(x) = \mathbb{E}_{S_k \sim \mathcal{D}}[S_k(S_k^TH(x)S_k)^{-1}S_k^T]$, \vspace{-0.1cm}
    \begin{equation*}
      \rho(x)=  \min_{v\in \mathbb{R}^n} \dfrac{v^TH(x)^{\frac{1}{2}}G(x)H(x)^{\frac{1}{2}}v}{\|v\|_2^2} ~~~ \text{and}  ~~~  \rho = \min_{x\in\mathbb{R}^n} \rho(x).
      \end{equation*}
 Given $\varepsilon >0$ and $\delta\in (0,1)$, consider the ZO-RSN method \eqref{eqn:updaterule_ZO} for Problem \eqref{eqn:unconstrained_problem}.
%
If $\gamma \leq 1/\hat L$ and $\alpha \leq 0.3 \mu/(m L_2)$ is small enough that\vspace{-0.2cm} 
    \begin{align*}
    \dfrac{\alpha(C_1+C_2\alpha)}{\rho \hat \mu \gamma -\alpha C_1 - \alpha^2 C_3} \leq \delta \varepsilon \quad \text{and} 
    \quad \alpha C_1 + \alpha^2 C_3 < \rho \hat \mu \gamma,
    \end{align*}
    then 
    we can achieve $\mathbb{E}[f(x_{k})-f^{\ast}]\leq \varepsilon$  after 
    \begin{equation*}
    k \geq \left \lceil {\log \left (  \frac{f(x_0)-f^{\ast}}{(1-\delta) \varepsilon} \right )}\middle/{\log \left( \frac{1}{1 - \rho \hat \mu \gamma + \alpha C_1 + \alpha^2 C_3}\right)} \right \rceil 
\end{equation*}
iterations  where $C_1 = \gamma(\sqrt{m}L + B)/(2\mu)$, $C_2 = \gamma L_1^2 [m + \sqrt{m}(1+B)]/(2\mu^2)$, $C_3 = \gamma L_1 [ \sqrt{m}L_1(1+B) + B(2+B) ] /(2\mu^2)$ and $B = 10mL_2/(3\mu)$.
\end{theorem}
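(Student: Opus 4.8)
The plan is to derive a one-step bound $\mathbb{E}[f(x_{k+1})-f^{\ast}\mid x_k]\le q\,(f(x_k)-f^{\ast})+b$ with $q=1-\rho\hat\mu\gamma+\alpha C_1+\alpha^2C_3$ and $b=\alpha(C_1+C_2\alpha)$, to verify that the two hypotheses on $\alpha$ make $q\in(0,1)$ and $b/(1-q)\le\delta\varepsilon$, and then to unroll the recursion.

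Fix $x_k$ and treat $S_k\sim\mathcal D$ as the only randomness. Since $S_k^TS_k=I$ and $f$ is $\mu$-strongly convex, $S_k^TH(x_k)S_k\succeq\mu I$; the finite-difference Hessian error $\tilde H_{S_k}(x_k)-S_k^TH(x_k)S_k$ is a Taylor remainder controlled by $L_2$-Lipschitzness of $H$ and has norm of order $\alpha m L_2$, which the hypothesis $\alpha\le0.3\mu/(mL_2)$ keeps below $\mu$. By Lemma~\ref{lemma:PD_H_tilde}, $\tilde H_{S_k}(x_k)\succeq\mu(1-B\alpha)I\succ0$, so $\tilde\lambda_k$ is well-defined and $\|\tilde H_{S_k}(x_k)^{-1}\|_2$ is uniformly bounded. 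Applying relative smoothness \eqref{eqn:lhatbound} with $(x,y)=(x_{k+1},x_k)$ to $x_{k+1}-x_k=\gamma S_k\tilde\lambda_k$, using $\gamma\le1/\hat L$ to replace $\hat L\gamma^2/2$ by $\gamma/2$ in the nonnegative term $\tilde\lambda_k^T(S_k^TH(x_k)S_k)\tilde\lambda_k$, and inserting $\tilde\lambda_k=-\tilde H_{S_k}(x_k)^{-1}\tilde g_{S_k}(x_k)$, $\tilde g_{S_k}(x_k)=S_k^Tg(x_k)+e_k$, $S_k^TH(x_k)S_k=\tilde H_{S_k}(x_k)-E_k$, I expect after cancellation the inequality
\[
f(x_{k+1})\le f(x_k)-\tfrac{\gamma}{2}\,\tilde g_{S_k}(x_k)^T\tilde H_{S_k}(x_k)^{-1}\tilde g_{S_k}(x_k)+\gamma\, e_k^T\tilde H_{S_k}(x_k)^{-1}\tilde g_{S_k}(x_k)-\tfrac{\gamma}{2}\,\tilde\lambda_k^TE_k\tilde\lambda_k .
\]

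The next step replaces the computed quantity $\tilde g_{S_k}(x_k)^T\tilde H_{S_k}(x_k)^{-1}\tilde g_{S_k}(x_k)$ by the exact-RSN quantity $\|g(x_k)\|^2_{S_k(S_k^TH(x_k)S_k)^{-1}S_k^T}$ of Lemma~\ref{lemma:RSNDescent}: their difference is linear in $e_k$ and $E_k$ and is bounded using $\|e_k\|_2\le\sqrt m\,\alpha L_1/2$, $\|E_k\|_2\le B\mu\alpha$, $\|S_k^TH(x_k)S_k\|_2\le L_1$, the uniform bound on $\|\tilde H_{S_k}(x_k)^{-1}\|_2$, and then $\|g(x_k)\|_2\le L$ (from $L$-Lipschitzness) on the parts linear in the gradient and $\|g(x_k)\|_2^2\le2L_1(f(x_k)-f^{\ast})$ on the parts quadratic in it; the two remaining terms of the displayed inequality are bounded the same way. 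Collecting everything gives $f(x_{k+1})\le f(x_k)-\tfrac{\gamma}{2}\|g(x_k)\|^2_{S_k(S_k^TH(x_k)S_k)^{-1}S_k^T}+(\alpha C_1+\alpha^2C_3)(f(x_k)-f^{\ast})+\alpha(C_1+C_2\alpha)$. Getting the remainder into exactly this shape — deciding which error terms are absorbed into the multiplicative factor and which stay additive, and tracking the precise dependence on $m,L,L_1,L_2,\mu,\gamma$ that fixes $B,C_1,C_2,C_3$ — is the one laborious step and the main obstacle; the rest is bookkeeping.

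Finally I take $\mathbb{E}_{S_k}$. By the definitions of $G$ and $\rho$, $H(x_k)^{1/2}G(x_k)H(x_k)^{1/2}\succeq\rho I$, i.e., $G(x_k)\succeq\rho\,H(x_k)^{-1}$, so $\mathbb{E}_{S_k}[\|g(x_k)\|^2_{S_k(S_k^TH(x_k)S_k)^{-1}S_k^T}]=g(x_k)^TG(x_k)g(x_k)\ge\rho\, g(x_k)^TH(x_k)^{-1}g(x_k)$; here $G(x)\succ0$ since $\mathbb{E}[S_kS_k^T]\succ0$, and $\rho>0$ since moreover $\mu I\preceq H(x)\preceq L_1I$. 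As \eqref{eqn:muhatbound} with $y=x_k$ holds for every $x$, $f^{\ast}$ is at least the minimum over $x$ of its right-hand side, namely $f(x_k)-\tfrac{1}{2\hat\mu}g(x_k)^TH(x_k)^{-1}g(x_k)$, so $g(x_k)^TH(x_k)^{-1}g(x_k)\ge2\hat\mu(f(x_k)-f^{\ast})$. Combining, $\mathbb{E}[f(x_{k+1})-f^{\ast}\mid x_k]\le(1-\rho\hat\mu\gamma+\alpha C_1+\alpha^2C_3)(f(x_k)-f^{\ast})+\alpha(C_1+C_2\alpha)$, and $\alpha C_1+\alpha^2C_3<\rho\hat\mu\gamma$ puts $q$ in $(0,1)$. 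Taking total expectations and iterating, $\mathbb{E}[f(x_k)-f^{\ast}]\le q^k(f(x_0)-f^{\ast})+\alpha(C_1+C_2\alpha)/(1-q)$; since $1-q=\rho\hat\mu\gamma-\alpha C_1-\alpha^2C_3$, the first hypothesis bounds the last term by $\delta\varepsilon$. Requiring $q^k(f(x_0)-f^{\ast})\le(1-\delta)\varepsilon$ and solving for $k$ (note $\log q<0$) gives exactly the stated bound, whence $\mathbb{E}[f(x_k)-f^{\ast}]\le(1-\delta)\varepsilon+\delta\varepsilon=\varepsilon$.
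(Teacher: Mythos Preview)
Your high-level strategy coincides with the paper's: one-step descent with additive and multiplicative error, conditional expectation turning the weighted norm into $\|g(x_k)\|_{G(x_k)}^2$, then $G(x_k)\succeq\rho H(x_k)^{-1}$ combined with relative convexity \eqref{eqn:muhatbound} to get the contraction factor, and finally the geometric unrolling. The expectation step and the final recursion are handled exactly as the paper does.

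Where you and the paper differ is precisely the ``laborious step'' you flag. The paper does \emph{not} expand $\tilde H_{S_k}(x_k)^{-1}$ directly. Instead it introduces the quadratic model $T_k(\lambda)=f(x_k)+\gamma g(x_k)^TS_k\lambda+\tfrac{\gamma}{2}\|\lambda\|_{S_k^TH(x_k)S_k}^2$ (whose minimizer is the exact RSN step $\lambda_k$) and writes
\[
f(x_{k+1})\le T_k(\tilde\lambda_k)=T_k(\lambda_k)+\bigl[T_k(\tilde\lambda_k)-T_k(\lambda_k)\bigr],
\]
so that the error is $\gamma g(x_k)^TS_k(\tilde\lambda_k-\lambda_k)+\tfrac{\gamma}{2}(\tilde\lambda_k+\lambda_k)^TS_k^TH(x_k)S_k(\tilde\lambda_k-\lambda_k)$. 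Then $\|\tilde\lambda_k-\lambda_k\|_2$ is bounded via a standard linear-system perturbation lemma (Horn--Johnson): with $A=S_k^TH(x_k)S_k$, $\Delta A=E_k$, $\Delta b=-e_k$, and using $\|A^{-1}\|_2\le 1/\mu$ together with $\|E_k\|_2\|A^{-1}\|_2\le 1/2$ (guaranteed by $\alpha\le 0.3\mu/(mL_2)$), one gets $\|\tilde\lambda_k-\lambda_k\|_2\le \mu^{-1}(\sqrt m L_1+B\|g(x_k)\|_2)\alpha$. This decomposition is what makes the constants trackable.

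There is one genuine gap in your sketch. You bound the parts linear in the gradient by $\|g(x_k)\|_2\le L$ ``from $L$-Lipschitzness of $f$'', but a $\mu$-strongly convex function on all of $\mathbb{R}^n$ cannot be globally Lipschitz, so this is not available. The paper instead uses the elementary scalar inequality $\|g(x_k)\|_2\le\tfrac12(1+\|g(x_k)\|_2^2)$ to convert every occurrence of $\|g(x_k)\|_2$ into an additive $1/2$ plus a $\|g(x_k)\|_2^2/2$; the latter is then absorbed via $\|g(x_k)\|_2^2\le 2L_1(f(x_k)-f^\ast)$, exactly as you do for the quadratic terms. This trick is what produces the split into $\alpha(C_1+C_2\alpha)$ and $\|g(x_k)\|_2^2\,\alpha(C_1+C_3\alpha)$ and fixes the stated constants.
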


Theorem~\ref{thm:Convergence} establishes a global, linear convergence for the ZO-RSN method toward an $\varepsilon$-accurate solution. 
%
The worst-case 
iteration complexity 
can be upper bounded as \vspace{-0.1cm}
\begin{equation}\label{eqn:weaker_bound}
   k \geq \left \lceil \beta_1 \log \left (  {[f(x_0)-f^{\ast}]}/{[(1-\delta) \varepsilon}] \right ) \right \rceil.
\end{equation}
where $\beta_1 = 1/(\rho \hat \mu \gamma -\alpha C_1 - \alpha^2 C_3)$.
%
We can recover the convergence complexity for the RSN method~\cite{gower2019rsn} if $\alpha$ and $\delta$ approach zero.
Furthermore, 
by choosing $S_k$ properly, the iteration complexity for the ZO-RSN method in Eq.~\eqref{eqn:weaker_bound} can be lower than the complexities for existing zeroth-order methods.
We show this with the following corollary: 
\begin{corollary}\label{cor:specialcase}
%
%
Suppose all the conditions of Theorem~\ref{thm:Convergence} hold. If the columns of $S_k$ are chosen randomly without replacement from a basis of orthonormal eigenvectors of $H(x_k)$, step-size $\gamma = {1}/{\hat L}$, and  $\alpha = (\sqrt{C_1^2/4 + (1-\sigma)\rho \hat \mu \gamma} -{C_1}/{2} )/C_2$ for some $\sigma \in (0,1)$, 
then $\rho = m/n$ and hence 
to achieve $\mathbb{E}[f(x_{k})-f^{\ast}]\leq \varepsilon$, we need \vspace{-0.2cm}
    \begin{equation}\label{eqn:special_bound}
   k \geq \left \lceil ({n \hat L}/{[\sigma m \hat \mu]}) \log \left (  {[f(x_0)-f^{\ast}]}\middle/[{(1-\delta) \varepsilon}] \right ) \right \rceil.
\end{equation}
\end{corollary}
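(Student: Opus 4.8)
The plan is to derive Corollary~\ref{cor:specialcase} as a direct specialization of Theorem~\ref{thm:Convergence} under the stated choices. First I would establish the claim that $\rho = m/n$ when the columns of $S_k$ are sampled without replacement from an orthonormal eigenbasis $\{u_1,\dots,u_n\}$ of $H(x_k)$. The key observation is that if $S_k$ consists of $m$ such eigenvectors, then $S_k^T H(x_k) S_k$ is diagonal with the corresponding eigenvalues, so $S_k(S_k^T H(x_k) S_k)^{-1}S_k^T = \sum_{i \in \mathcal{I}_k} \lambda_i^{-1} u_i u_i^T$ where $\mathcal{I}_k$ indexes the chosen eigenvectors. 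Taking the expectation over a uniform random $m$-subset, each index $i$ is included with probability $m/n$, so $G(x) = \mathbb{E}[S_k(S_k^TH(x)S_k)^{-1}S_k^T] = (m/n)\sum_{i=1}^n \lambda_i^{-1} u_i u_i^T = (m/n) H(x)^{-1}$. Substituting into the Rayleigh-quotient definition of $\rho(x)$ gives $H(x)^{1/2} G(x) H(x)^{1/2} = (m/n) I$, whence $\rho(x) = m/n$ for every $x$, and therefore $\rho = m/n$.

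Next I would plug $\gamma = 1/\hat L$ and $\rho = m/n$ into the contraction factor from Theorem~\ref{thm:Convergence}, giving a per-iteration rate of $1 - \rho\hat\mu\gamma + \alpha C_1 + \alpha^2 C_3 = 1 - (m\hat\mu)/(n\hat L) + \alpha C_1 + \alpha^2 C_3$. The role of the prescribed $\alpha = (\sqrt{C_1^2/4 + (1-\sigma)\rho\hat\mu\gamma} - C_1/2)/C_2$ is to make the $\alpha$-dependent perturbation absorb cleanly: I would verify that this is the positive root of the quadratic $C_2\alpha^2 + C_1\alpha = (1-\sigma)\rho\hat\mu\gamma$ in $\alpha$ (completing the square on $C_2(\alpha + C_1/(2C_2))^2$ confirms this). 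Hmm — one has to be a little careful here, since the perturbation terms in the theorem involve $C_1$ and $C_3$, not $C_1$ and $C_2$; I would argue that $C_3 \le C_2$ (comparing the bracketed expressions $\sqrt{m}L_1(1+B) + B(2+B)$ against $L_1[m + \sqrt{m}(1+B)]$, using $L \ge L_1$ or the appropriate Lipschitz relations, so that $\alpha C_1 + \alpha^2 C_3 \le \alpha C_1 + \alpha^2 C_2 = (1-\sigma)\rho\hat\mu\gamma$), giving the bound $1 - \rho\hat\mu\gamma + \alpha C_1 + \alpha^2 C_3 \le 1 - \sigma\rho\hat\mu\gamma = 1 - \sigma m\hat\mu/(n\hat L)$.

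Finally I would feed this contraction factor into the iteration-count expression. From Theorem~\ref{thm:Convergence}, $k \ge \lceil \log((f(x_0)-f^\ast)/((1-\delta)\varepsilon)) / \log(1/(1-\rho\hat\mu\gamma+\alpha C_1+\alpha^2 C_3)) \rceil$, and using the elementary inequality $\log(1/(1-t)) \ge t$ for $t \in (0,1)$ with $t = \sigma m\hat\mu/(n\hat L)$ (valid since the right side is a valid contraction factor in $(0,1)$ once the theorem's hypotheses are met) yields $1/\log(1/(1-t)) \le 1/t = n\hat L/(\sigma m \hat\mu)$, which gives exactly Eq.~\eqref{eqn:special_bound}. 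I would also remark that the prescribed $\alpha$ automatically satisfies the feasibility conditions $\alpha C_1 + \alpha^2 C_3 < \rho\hat\mu\gamma$ (since $(1-\sigma)\rho\hat\mu\gamma < \rho\hat\mu\gamma$) and, for $\sigma$ close enough to $1$ or $m L_2$ small enough, also $\alpha \le 0.3\mu/(mL_2)$, so Theorem~\ref{thm:Convergence} genuinely applies. The main obstacle I anticipate is the bookkeeping in the second paragraph: matching the quadratic that defines $\alpha$ (which uses $C_1, C_2$) against the perturbation actually appearing in the rate (which uses $C_1, C_3$) requires the inequality $C_3 \le C_2$, and making that comparison rigorous from the definitions of the constants and the Lipschitz hierarchy ($L$, $L_1$, $L_2$) is the one genuinely non-mechanical step.
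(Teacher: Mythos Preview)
Your computation of $\rho = m/n$ is correct and essentially equivalent to the paper's: the paper works with $\hat P(x_k) = H(x_k)^{1/2}S_k(S_k^TH(x_k)S_k)^{-1}S_k^TH(x_k)^{1/2}$, shows that $\hat P(x_k) = S_kS_k^T$ when the columns of $S_k$ are eigenvectors of $H(x_k)$, and then takes $\mathbb{E}[S_kS_k^T] = (m/n)I$; you instead compute $G(x) = (m/n)H(x)^{-1}$ directly. Both give $H^{1/2}GH^{1/2} = (m/n)I$ and hence $\rho = m/n$. Your use of $\log(1/(1-t))\ge t$ to pass from the bound in Theorem~\ref{thm:Convergence} to the simplified form is exactly how the paper obtained Eq.~\eqref{eqn:weaker_bound}, which it then quotes.

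There is, however, a genuine gap in your second paragraph. Your claim that the stated $\alpha$ is the positive root of $C_2\alpha^2 + C_1\alpha = (1-\sigma)\rho\hat\mu\gamma$ is incorrect: writing $C_2\alpha + C_1/2 = \sqrt{C_1^2/4 + (1-\sigma)\rho\hat\mu\gamma}$ and squaring gives $C_2^2\alpha^2 + C_1C_2\alpha = (1-\sigma)\rho\hat\mu\gamma$, not $C_2\alpha^2 + C_1\alpha$. Your fallback plan of proving $C_3\le C_2$ also fails in general: from the definitions, $C_2 - C_3 = \gamma L_1[L_1 m - B(2+B)]/(2\mu^2)$ with $B = 10mL_2/(3\mu)$, and $L_1 m \ge B(2+B)$ is not implied by any of the standing assumptions. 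So your conclusion $\alpha C_1 + \alpha^2 C_3 \le (1-\sigma)\rho\hat\mu\gamma$ is unsupported as written.

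For comparison, the paper's own proof of this step simply asserts that with the stated $\alpha$ one has $\rho\hat\mu\gamma - \alpha C_1 - \alpha^2 C_3 = \sigma m\hat\mu/(n\hat L)$, with no supporting algebra and no mention of the $C_2$/$C_3$ mismatch you noticed; this strongly suggests a typo in the corollary's $\alpha$ formula (the intended expression is presumably the positive root of $C_3\alpha^2 + C_1\alpha = (1-\sigma)\rho\hat\mu\gamma$, namely $\alpha = \big(\sqrt{C_1^2/4 + C_3(1-\sigma)\rho\hat\mu\gamma} - C_1/2\big)/C_3$). You were right to flag the discrepancy, but the clean resolution is to correct the formula for $\alpha$, not to try to manufacture an inequality between $C_2$ and $C_3$.
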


Corollary~\ref{cor:specialcase} shows that the iteration complexity of the ZO-RSN methods depends on the subspace dimension $m$, the problem dimension $n$ and other parameters $\hat\mu,\hat L$.
Since the ZO-RSN methods need $m(m+1)/2$ function queries per iteration, we can obtain the total query complexity by multiplying Eq.~\eqref{eqn:special_bound} with this factor. 
%
%

Now, we compare the complexity bounds for the ZO-RSN methods against the Hessian-aware zeroth-order method using the power iteration (ZOHA-PW) \cite{YeHaishan2018HZOf}, which previously has been compared favourably to the zeroth-order method in \cite{Nesterov2017}. Since the ZOHA-PW  method also generates multiple random directions, here $m$ refers to the number of the generated directions. For $\mu$-strongly convex problems, the iteration complexity of ZOHA-PW is 
%
%
%
%
%
\vspace{-0.2cm}
\begin{align}\label{eqn:IterationComplexity_ZO_2}
   k \geq \left \lceil \beta_2 \log \left( {[f(x_0)-f^{\ast}]}/{[(1-\hat{\delta}) \varepsilon]}\right)  \right \rceil, 
\end{align}
where $\beta_2 = {64(n+2)(\mu+10 \lambda_{s+1})}/{(\mu m)}$,  
$\lambda_{s+1}$ is an upper bound on the Hessian's $(s+1)^\text{th}$ largest eigenvalue and $\hat \delta$ is a free parameter which is similar to $\delta$ in Eq.~\eqref{eqn:special_bound}. Disregarding the function evaluations required to implement the power method, the total query complexity for ZOHA-PW is $2m$ times its iteration complexity. Consider the problem of minimizing a quadratic function. Then, $\hat L = \hat \mu= 1$. 
If $\delta$, $\hat \delta$ and $m$ all are set to be equal for both methods, and also $\sigma=0.5$, then the speedup in iteration complexity from using ZO-RSN instead of ZOHA-PW is \vspace{-0.2cm}
$$32\left(1+{2}/{n}\right)\left(1 + {10 \lambda_{s+1}}/{\mu}\right).$$
ZO-RSN is thus faster than ZOHA-PW by more than two orders of magnitude in iteration complexity, even for well-conditioned problems (when $\lambda_{s+1}/\mu$ is close to one). 
If function queries can be performed efficiently in parallel, then  ZO-RSN has significantly lower run-time than  ZOHA-PW. 
We can also prove that the speedup in query complexity for ZO-RSN compared to ZOHA-PW is \vspace{-0.2cm}
$$[{128\left(1+{2}/{n}\right)}\left(1 + {10 \lambda_{s+1}}/{\mu}\right)]/(m+1).$$
Thus, as long as $m < {128}\left(1 + {10 \lambda_{s+1}}/{\mu}\right)-1$, the query complexity will be lower for ZO-RSN.

\section{Numerical experiments}
\label{sec:numerical_results}

We compare the performance of ZO-RSN against the existing Hessian-aware zeroth methods called ZOHA-Gauss-DC \cite{YeHaishan2018HZOf} that uses a descent-checking procedure to increase an attack success rate, and  approximates Hessian according to 
\vspace{-0.2cm}
$$
\tilde H = (2\alpha^2b)^{-1} \sum_{i=1}^b | \Delta_{\alpha u_i}f(x) - \Delta_{\alpha u_i}f(x - \alpha u_i) | u_iu_i^T + \lambda I_d,
$$
where $\lambda$ is a positive constant and $u_1,\ldots, u_b$ are the vectors generated from the Gaussian distribution with zero mean and unit variance.  
In particular, we evaluate both methods on training un-targeted black box adversarial attacks over the  \texttt{MNIST} data set  \cite{CarliniNicholas2017TEtR,YeHaishan2018HZOf}. 
%
These attacks are carried out against the trained convolutional neural network (CNN) model described in \cite{YeHaishan2018HZOf}[Section 5.2].  
For each example $x^{nat}_{i}$ in the test set, the optimizer aims to generate an adversarial example $x_i$ which differs from $x^{nat}_i$ by at most $\epsilon$ in $\ell_{\infty}$ norm, while being classified differently with sufficient confidence. This is done by minimizing the following function \cite{CarliniNicholas2017TEtR}: \vspace{-0.2cm}
\begin{equation}
    f(x) = \max \left \{ \max_{i \neq l} \log [Z(x)]_i - \log [Z(x)]_l, - \omega \right\}.
\end{equation}
Here, $[Z(x)]_i$ represents the probability of an input $x$ belonging to class $i$ according to the trained neural network.

Since the problem is constrained and does not have guarantees for $\mu$-strong convexity or $L_1$-smoothness, we need to modify the ZO-RSN algorithm. Firstly, we artificially ensure positive definiteness and boundedness of $\tilde{H}_{S_k}(x_k)$ by applying the operator $\Pi_{[\lambda_{\min},\lambda_{\max}]}(\cdot)$ that projects its eigenvalues onto an interval $[\lambda_{\min},\lambda_{\max}]$ to get a modified matrix $\hat{H}_{S_k}(x_k)$. 
Secondly, we consider $\ell_\infty$-norm constraints by determining $\tilde{\lambda}_k$ that solves the following minimization problem
%
\vspace{-0.2cm}
\begin{equation}\label{eqn:QP_subproblem}
\begin{aligned}
& \underset{\lambda \in \mathbb{R}^m}{\text{minimize}}
& &  f(x_k) + \gamma \tilde{g}_{S_k}(x_k)^T\lambda + \frac{\gamma}{2}\|\lambda\|_{\hat{H}_{S_k}(x_k)}^2 \\
& \text{subject to} & -& \gamma S_k \lambda \leq x_k - x_i^{nat} -\mathbf{1} \epsilon \\ 
& & & \gamma S_k \lambda \leq \mathbf{1}\epsilon + x^{nat}_i - x_k.
\end{aligned} 
\end{equation}
This approach corresponds to using sequential quadratic programming (SQP) for nonlinear problems with linear constraints, but with the step to the next iterate being restricted to lie in a specific subspace.
To solve the auxiliary problem \eqref{eqn:QP_subproblem} quickly with a standard \texttt{cvxopt} solver \cite{andersen2021cvxopt}, 
%
we generate $S_k$ by choosing its columns to be unit coordinate vectors. 
This enables us to formulate the problem with only $m$ constraints.
This adapted ZO-RSN algorithm is called ZO-RSN-SQP.
%
%
Finally, we use the descent-checking technique corresponding to that for ZOHA-Gauss-DC. The full description of   ZO-RSN-SQP is given in Algorithm~\ref{alg:ZO-RSN-SQP_ICASSP}.

\begin{algorithm}\footnotesize
\caption{ZO-RSN-SQP for black-box attack}
\begin{algorithmic}
\item Initialize $x_0 \gets x^{nat}_i,\alpha,\gamma,m,m_{max}$
\For{$k = 0,1,...,k_{max}$}
    \State Generate $S_k = [s_{1,k},...,s_{m,k}]$
    \State Compute $\tilde{g}_{S_k}$ and  $\tilde{H}_{S_k}$
    \State $\hat{H}_{S_k} \gets \Pi_{[\lambda_{\min},\lambda_{\max}]}(\tilde{H}_{S_k})$
     \State $\tilde{\lambda}_k \gets \text{Solution to \eqref{eqn:QP_subproblem} with }\hat{H}_{S_k}(x_k) \text{ and } \tilde{g}_{S_k}(x_k)$
    \State $x_{trial} \gets x_k + S_k \tilde{\lambda}_k $
    \While{$f(x_{trial}) \geq f(x_k)$ and $\bar m < m_{max}$}
        \State  $\bar m \gets \bar m + 1$ 
        \State Generate $s_{\bar m,k} \text{ such that } [S_k,s_{\bar m,k}]^T[S_k,s_{\bar m,k}] = I$
        \State $S_k \gets [s_{1,k},...,s_{\bar m,k}]$
        \State $[\Tilde{g}_{S_k}(x_k)]_{\bar m} \gets {\Delta_{\alpha s_{i,k}}f(x_k)}/{\alpha}$
        \For{$j = 1,2,...,\bar m$}
            \State [$\Tilde{H}_{S_k}(x_k)]_{\bar m,j}
            \gets   \Delta_{\alpha s_{i,k}}\Delta_{\alpha s_{j,k}} f(x_k)/{\alpha^2} $
            \State $[\Tilde{H}_{S_k}(x_k)]_{j,\bar m} \gets [\Tilde{H}_{S_k}(x_k)]_{\bar m,j}$
        \EndFor
        \State $\hat{H}_{S_k} \gets \Pi_{[\lambda_{\min},\lambda_{\max}]}(\tilde{H}_{S_k})$
        \State  $\tilde{\lambda}_k \gets \text{Solution to \eqref{eqn:QP_subproblem} with }\hat{H}_{S_k}(x_k) \text{ and } \tilde{g}_{S_k}(x_k)$
        \State $x_{trial} \gets x_k + \gamma S_k \tilde{\lambda}_k$
    \EndWhile
    \If{$f(x_{trial}) \leq f(x_k)$}
        \State $x_{k+1} \gets x_{trial}$
    \Else 
        \State $x_{k+1} \gets x_k$
    \EndIf
\EndFor
\end{algorithmic}\label{alg:ZO-RSN-SQP_ICASSP}
\end{algorithm}

We trained the network model until its accuracy reached $98.84\%$, and also set $\alpha = 0.1, \gamma=1, m =3$ and $m_{\max}=20$ for ZO-RSN-SQP and the same parameters for ZOHA-Gauss-DC for the un-targeted black box attacks described in \cite{YeHaishan2018HZOf}.
In the experiments, we either ended a test run if the algorithm managed to find a point with function value at $\omega = -1$, or if the algorithm  called queried the neural network for a prediction 50000 times. We labelled the former result as a success and the latter result as a failure. 

\begin{table}[t]
    \centering
    \begin{tabular}{|c|c|c|c|c|} \hline
         Algorithm &  ZO-RSN-SQP & ZOHA-Gauss-DC \\ \hline 
         Success rate (\%) & {$\bf 100$}  & $95.33$  \\ \hline 
         Median queries & $2336$ & {$\bf 815$} \\ \hline 
         Mean queries & {$\bf 2510$} & $4164$ \\ \hline 
         Max queries & {$\bf 8239$}  & $50000$ \\ \hline
         $f_{est2000}-f^*$ & $1.94$ & $3.70\cdot 10^{-1}$ \\ \hline
         $f_{est4000}-f^*$ & $1.89 \cdot 10^{-1}$ & $1.86 \cdot 10^{-1}$\\ \hline
         $f_{est6000}-f^*$ & $2.42 \cdot 10^{-2}$ & $1.47 \cdot 10^{-1}$ \\ \hline
    \end{tabular}
    \caption{Comparison of $\ell_{\infty}$ norm based black-box attacks on a CNN model trained on the \texttt{MNIST} data.}
    \label{tab:results}
\end{table}

%
The results of our black box attack experiments were summarized in Table~\ref{tab:results}. 
Firstly, ZO-RSN-SQP has a more stable performance than ZOHA-Gauss-DC. Even though both algorithms implement the same decent checking technique, only ZO-RSN-SQP 
succeeds in the attacks for all cases. Secondly, the mean number of queries for ZO-RSN-SQP is lower than that for ZOHA-Gauss-DC. This results from a minority of the problems, where ZOHA-Gauss-DC requires a large number of queries to solve. In contrast, ZOHA-Gauss-DC has a lower median value than ZO-RSN-SQP. As ZO-RSN requires more function queries per iteration and subspace dimension than ZOHA-Gauss-DC, one can hypothesize this extra effort is worthwhile mainly for the harder-to-attack test examples. 
%
%


To investigate the speed of convergence, we also ran a separate experiment where we made estimates of the average objective value after 2000, 4000 and 6000 queries, $f_{est2000}$, $f_{est4000}$ , $f_{est6000}$, using the first 100 MNIST examples. The suboptimalities based on these results are also shown in Table~\ref{tab:results}. As we can see, ZOHA-Gauss-DC is initially faster, but ZO-RSN-SQP becomes more accurate towards the end.

\section{Conclusions}
\label{sec:conclusions}
We have proposed the ZO-RSN method, a Hessian-based zeroth-order method that approximates sketched gradients and Hessians by finite differences.
Our results display a lower iteration complexity of the ZO-RSN method than existing zeroth-order methods for strongly convex problems.
The experiments with un-targeted adversarial attacks on a CNN model illustrate
%
that the modified ZO-RSN method named ZO-RSN-SQP attains an overall competitive performance and a higher stability, compared to ZOHA-Gauss-DC.

%
%

\vfill\pagebreak

\bibliographystyle{IEEEbib}
\bibliography{refs}

\begin{thebibliography}{10}

\bibitem{snoek2012practical}
Jasper Snoek, Hugo Larochelle, and Ryan~P Adams,
\newblock ``Practical bayesian optimization of machine learning algorithms,''
\newblock {\em Advances in neural information processing systems}, vol. 25,
  2012.

\bibitem{bergstra2011algorithms}
James Bergstra, R{\'e}mi Bardenet, Yoshua Bengio, and Bal{\'a}zs K{\'e}gl,
\newblock ``Algorithms for hyper-parameter optimization,''
\newblock {\em Advances in neural information processing systems}, vol. 24,
  2011.

\bibitem{chen2017zoo}
Pin-Yu Chen, Huan Zhang, Yash Sharma, Jinfeng Yi, and Cho-Jui Hsieh,
\newblock ``Zoo: Zeroth order optimization based black-box attacks to deep
  neural networks without training substitute models,''
\newblock in {\em Proceedings of the 10th ACM workshop on artificial
  intelligence and security}, 2017, pp. 15--26.

\bibitem{papernot2017practical}
Nicolas Papernot, Patrick McDaniel, Ian Goodfellow, Somesh Jha, Z~Berkay Celik,
  and Ananthram Swami,
\newblock ``Practical black-box attacks against machine learning,''
\newblock in {\em Proceedings of the 2017 ACM on Asia conference on computer
  and communications security}, 2017, pp. 506--519.

\bibitem{hu2017generating}
Weiwei Hu and Ying Tan,
\newblock ``Generating adversarial malware examples for black-box attacks based
  on {GAN},''
\newblock {\em arXiv preprint arXiv:1702.05983}, 2017.

\bibitem{ilyas2018black}
Andrew Ilyas, Logan Engstrom, Anish Athalye, and Jessy Lin,
\newblock ``Black-box adversarial attacks with limited queries and
  information,''
\newblock in {\em International Conference on Machine Learning}. PMLR, 2018,
  pp. 2137--2146.

\bibitem{liu2016sensor}
Sijia Liu, Sundeep~Prabhakar Chepuri, Makan Fardad, Engin Ma{\c{s}}azade, Geert
  Leus, and Pramod~K Varshney,
\newblock ``Sensor selection for estimation with correlated measurement
  noise,''
\newblock {\em IEEE Transactions on Signal Processing}, vol. 64, no. 13, pp.
  3509--3522, 2016.

\bibitem{hero2011sensor}
Alfred~O Hero and Douglas Cochran,
\newblock ``Sensor management: Past, present, and future,''
\newblock {\em IEEE Sensors Journal}, vol. 11, no. 12, pp. 3064--3075, 2011.

\bibitem{liu2018zeroth}
Sijia Liu, Jie Chen, Pin-Yu Chen, and Alfred Hero,
\newblock ``Zeroth-order online alternating direction method of multipliers:
  Convergence analysis and applications,''
\newblock in {\em International Conference on Artificial Intelligence and
  Statistics}. PMLR, 2018, pp. 288--297.

\bibitem{book}
Boris Polyak,
\newblock {\em Introduction to Optimization},
\newblock 07 2020.

\bibitem{Nesterov2017}
Yurii Nesterov and Vladimir Spokoiny,
\newblock ``Random gradient-free minimization of convex functions,''
\newblock {\em Foundations of Computational Mathematics}, vol. 17, no. 2, pp.
  527--566, Apr 2017.

\bibitem{YeHaishan2018HZOf}
Haishan Ye, Zhichao Huang, Cong Fang, Chris~Junchi Li, and Tong Zhang,
\newblock ``Hessian-aware zeroth-order optimization for black-box adversarial
  attack,''
\newblock {\em arXiv preprint arXiv:1812.11377}, 2018.

\bibitem{tang2017gradient}
Junqi Tang, Mohammad Golbabaee, and Mike~E Davies,
\newblock ``Gradient projection iterative sketch for large-scale constrained
  least-squares,''
\newblock in {\em International Conference on Machine Learning}. PMLR, 2017,
  pp. 3377--3386.

\bibitem{gower2019rsn}
Robert Gower, Dmitry Kovalev, Felix Lieder, and Peter Richtarik,
\newblock ``{RSN:} randomized subspace newton,''
\newblock in {\em Advances in Neural Information Processing Systems}. 2019,
  vol.~32, Curran Associates, Inc.

\bibitem{pilanci2017newton}
Mert Pilanci and Martin~J Wainwright,
\newblock ``Newton sketch: A near linear-time optimization algorithm with
  linear-quadratic convergence,''
\newblock {\em SIAM Journal on Optimization}, vol. 27, no. 1, pp. 205--245,
  2017.

\bibitem{praneeth2018global}
Sai~Praneeth Karimireddy, Sebastian~U Stich, and Martin Jaggi,
\newblock ``Global linear convergence of {Newton}'s method without
  strong-convexity or lipschitz gradients,''
\newblock {\em arXiv preprint arXiv:1806.00413}, 2018.

\bibitem{10.5555/2946645.2946698}
Mert Pilanci and Martin~J. Wainwright,
\newblock ``Iterative {Hessian} sketch: Fast and accurate solution
  approximation for constrained least-squares,''
\newblock {\em J. Mach. Learn. Res.}, vol. 17, no. 1, pp. 1842–1879, Jan.
  2016.

\bibitem{doi:10.1137/060673096}
Nir Ailon and Bernard Chazelle,
\newblock ``The fast johnson–lindenstrauss transform and approximate nearest
  neighbors,''
\newblock {\em SIAM Journal on Computing}, vol. 39, no. 1, pp. 302--322, 2009.

\bibitem{CarliniNicholas2017TEtR}
Nicholas Carlini and David Wagner,
\newblock ``Towards evaluating the robustness of neural networks,''
\newblock in {\em 2017 IEEE Symposium on Security and Privacy (SP)}. 2017, pp.
  39--57, IEEE.

\bibitem{andersen2021cvxopt}
M~Andersen, J~Dahl, and L~Vandenberghe,
\newblock ``{CVXOPT:} python software for convex optimization, version 1.2.6,''
\newblock {\em URL https://cvxopt. org}, 2021.

\bibitem{horn2013matrix}
R.A. Horn and C.R. Johnson,
\newblock {\em Matrix Analysis},
\newblock Matrix Analysis. Cambridge University Press, 2013.

\end{thebibliography}

\newpage

\appendix

\section{Proof of Lemma 1}
If $\gamma \leq 1/\hat L$, then from Eq. \eqref{eqn:lhatbound} with $x = x_{k+1}, y=x_k$
\begin{equation}\label{eqn:gammabound}
    f(x_{k+1}) \leq f(x_k) + g(x_k)^T(x_{k+1}-x_k) + \frac{1}{2\gamma} \|x_{k+1}-x_k\|_{H(x_k)}^2.
\end{equation}
Utilizing the updates from  Eq.  \eqref{eqn:updaterule} that  $x_{k+1} - x_k  = \gamma S_k \lambda_k$ and $\lambda_k = -(S_k^T H_k S_k)^\dagger S_k^T g(x_k)$, we complete the proof.

\section{Proof of Lemma 2}
If $S_k^TS_k = I$ and also $v \in \mathbb{R}^m$ has norm 1, then  $\mu \leq v^T S_k^T H(x_k) S_k v_k$. This condition implies that $S_k^TH(x_k)S_k$ is positive definite and its lowest eigenvalue is bounded by $\mu$. Then, 
$\tilde{H}_{S_k}(x_k)\succ 0$ iff $\tilde{H}_{S_k}(x_k) (S_k^T H(x_k) S_k)^{-1}\succ 0$.
Since 
\begin{align*}
    &\tilde{H}_{S_k}(x_k) (S_k^T H(x_k) S_k)^{-1}\\
    &\hspace{0.5cm}= I +  (\tilde{H}_{S_k}(x_k) - S_k^T H(x_k) S_k)(S_k^T H(x_k) S_k)^{-1},
\end{align*}
positive definiteness of $\tilde{H}_{S_k}(x_k)$ is ensured if 
\begin{align}\label{eqn:Conditioned_S_K_appendix}
\|(\tilde{H}_{S_k}(x_k) - S_k^T H(x_k) S_k)(S_k^T H(x_k) S_k)^{-1}\|_2 < 1.   
\end{align}
Since 
\begin{align*}
    &\|(\tilde{H}_{S_k}(x_k) - S_k^T H(x_k) S_k)(S_k^T H(x_k) S_k)^{-1}\|_2 \\
    &\leq \|\tilde{H}_{S_k}(x_k) - S_k^T H(x_k) S_k \|_2 \| (S_k^T H(x_k) S_k)^{-1} \|_2 \\
    &\leq \|\tilde{H}_{S_k}(x_k) - S_k^T H(x_k) S_k \|_2 / \mu,
\end{align*}
a sufficient condition ensuring that Eq. \eqref{eqn:Conditioned_S_K_appendix} holds is
\begin{align*}
     \|(\tilde{H}_{S_k}(x_k) - S_k^T H(x_k) S_k) \|_2 < \mu.
\end{align*}

\section{Lemma 3}
To facilitate the analysis, we establish key error bounds due to finite difference estimations. 
\begin{lemma}\label{lemma:error_bounds}
Consider the ZO-RSN method \eqref{eqn:updaterule_ZO} for solving Problem \eqref{eqn:unconstrained_problem}. 
Let $e_k = \tilde{g}_{S_k}(x_k) - S_k^Tg(x_k)$ and $E_k =\tilde{H}_{S_k}(x_k) - S_k^T H(x_k) S_k$.
Then, 
\begin{align*}
    \| e_k \|_2  \leq  \sqrt{m}\alpha L_1/2,  \quad \text{and} \quad   \|E_k \|_2 \leq  5m\alpha L_2/3. 
\end{align*} 
\end{lemma}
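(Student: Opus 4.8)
\textbf{Setup and strategy.} The plan is to bound each entry of the error vector $e_k$ and the error matrix $E_k$ using Taylor's theorem with integral (or Lagrange) remainder, exploiting that $g$ is $L_1$-Lipschitz (equivalently $f$ is $L_1$-smooth, so $\|H(x)-H(y)\|_2$ is controlled... actually we only need $L_1$-smoothness for $e_k$) and $H$ is $L_2$-Lipschitz for $E_k$. Crucially, since $S_k^TS_k = I$, every column $s_{i,k}$ is a unit vector, so $\|s_{i,k}\|_2 = 1$; this kills all the norm factors in the remainder estimates and is what makes the constants clean. Then I convert the entrywise bounds into operator-norm ($\ell_2$) bounds: for the vector $e_k \in \mathbb{R}^m$, $\|e_k\|_2 \le \sqrt{m}\max_i |[e_k]_i|$; for the symmetric matrix $E_k \in \mathbb{R}^{m\times m}$, $\|E_k\|_2 \le \max_i \sum_j |[E_k]_{i,j}| \le m\max_{i,j}|[E_k]_{i,j}|$ (the infinity-norm / row-sum bound on the spectral norm of a symmetric matrix).

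\textbf{Gradient error.} For the $i$-th component, $[e_k]_i = \Delta_{\alpha s_{i,k}}f(x_k)/\alpha - s_{i,k}^Tg(x_k) = \big(f(x_k+\alpha s_{i,k}) - f(x_k)\big)/\alpha - s_{i,k}^Tg(x_k)$. Writing $f(x_k+\alpha s_{i,k}) - f(x_k) = \int_0^1 \alpha\, g(x_k + t\alpha s_{i,k})^T s_{i,k}\,dt$, I get $[e_k]_i = \int_0^1 \big(g(x_k+t\alpha s_{i,k}) - g(x_k)\big)^T s_{i,k}\,dt$, whose absolute value is at most $\int_0^1 L_1 t\alpha \|s_{i,k}\|_2^2\,dt = \alpha L_1/2$. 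Hence $\|e_k\|_2 \le \sqrt{m}\,\alpha L_1/2$.

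\textbf{Hessian error.} For the $(i,j)$-entry, $[E_k]_{i,j} = \big(f(x_k+\alpha s_i+\alpha s_j) - f(x_k+\alpha s_i) - f(x_k+\alpha s_j) + f(x_k)\big)/\alpha^2 - s_i^TH(x_k)s_j$ (dropping the $k$ subscript on columns). The plan is to write the second finite difference as a double integral of the Hessian: the standard identity gives $f(x_k+\alpha s_i+\alpha s_j) - f(x_k+\alpha s_i) - f(x_k+\alpha s_j) + f(x_k) = \alpha^2 \int_0^1\int_0^1 s_i^T H(x_k + t\alpha s_i + u\alpha s_j)\, s_j\,dt\,du$. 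Subtracting $s_i^TH(x_k)s_j = \int_0^1\int_0^1 s_i^TH(x_k)s_j\,dt\,du$ and using $L_2$-Lipschitzness of $H$ together with $\|t\alpha s_i + u\alpha s_j\|_2 \le \alpha(t\|s_i\|_2 + u\|s_j\|_2) = \alpha(t+u)$, I bound $|[E_k]_{i,j}| \le \int_0^1\int_0^1 L_2\,\alpha(t+u)\,dt\,du = L_2\alpha \cdot (1/2 + 1/2) = \alpha L_2$. That would give the row-sum bound $\|E_k\|_2 \le m\alpha L_2$, which is slightly weaker than the claimed $5m\alpha L_2/3$; so the constant $5/3$ must come from a sharper accounting — most likely the paper does not use $\|s_i\|_2=1$ so cleanly, or it bounds $\int_0^1\int_0^1(t+u)\,dt\,du$ differently, or it uses a one-sided version of the second difference. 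I would first try to match $5/3$ by redoing the double-integral remainder with the Lagrange (rather than integral) form, or by noting $\frac12(t^2+u^2)$-type terms; failing an exact match, I would simply record that the above yields $\|E_k\|_2 \le m\alpha L_2$ (already sufficient for all downstream arguments, since it only feeds into "$\alpha$ small enough" conditions), and flag the constant as the one subtle point.

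\textbf{Main obstacle.} The genuinely delicate step is pinning down the exact constant $5/3$ in the Hessian-error bound: the clean argument above produces $1$ rather than $5/3$, so either the paper is being (harmlessly) conservative, or it tracks an extra first-order cross-term in the expansion of the mixed second difference that I have cancelled exactly. Everything else — the gradient bound, the unit-norm simplification, and the passage from entrywise to spectral-norm bounds — is routine.
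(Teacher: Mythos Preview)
Your argument is correct, and for the gradient error it is essentially the paper's proof (integral remainder versus Lagrange remainder is a cosmetic difference). For the Hessian error, however, you take a genuinely different route and then misread your own result.

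\textbf{Your bound is stronger, not weaker.} You obtain $\|E_k\|_2 \le m\alpha L_2$, whereas the lemma claims $\|E_k\|_2 \le (5/3)m\alpha L_2$. Since $1 < 5/3$, your inequality \emph{implies} the paper's; you have proved more, not less. The ``main obstacle'' you flag is therefore a non-issue.

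\textbf{How the approaches differ.} The paper expands each of $f(x_k+\alpha(s_i+s_j))$, $f(x_k+\alpha s_i)$, $f(x_k+\alpha s_j)$ to second order with a third-derivative Lagrange remainder, bounds $|f'''(\cdot)[u][u][u]|\le L_2\|u\|_2^3$, and gets the entrywise estimate
\[
|[E_k]_{i,j}| \le \tfrac{\alpha}{6}\big(L_2\|s_i+s_j\|_2^3 + L_2\|s_i\|_2^3 + L_2\|s_j\|_2^3\big).
\]
The worst case is the diagonal $i=j$, where $\|2s_i\|_2^3=8$ gives $\tfrac{\alpha L_2}{6}(8+1+1)=\tfrac{5}{3}\alpha L_2$; that is the origin of the constant $5/3$. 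The paper then applies Ger\v{s}gorin to $E_k^TE_k$ to pass to the spectral norm. Your double-integral identity
\[
\Delta_{\alpha s_i}\Delta_{\alpha s_j}f(x_k)=\alpha^2\!\int_0^1\!\!\int_0^1 s_i^TH(x_k+t\alpha s_i+u\alpha s_j)s_j\,dt\,du
\]
bypasses the third derivative entirely, uses only the $L_2$-Lipschitz property of $H$, and yields the tighter entrywise bound $\alpha L_2$ uniformly in $i,j$. Your passage to the spectral norm via $\|E_k\|_2\le\|E_k\|_\infty\le m\max_{i,j}|[E_k]_{i,j}|$ is just Ger\v{s}gorin applied directly to the symmetric matrix $E_k$, so it is the same tool used one step earlier. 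In short: your argument is cleaner, requires one less derivative assumption in spirit, and gives a better constant; the paper's $5/3$ is a harmless artifact of its cruder expansion.
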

\begin{proof}
Define $e_k = \tilde{g}_{S_k}(x_k) - S_k^Tg(x_k)$ and $E_k =\tilde{H}_{S_k}(x_k) - S_k^T H(x_k) S_k$. To prove the upper-bound for $\| e_k \|_2$, consider the first-order Taylor expansion of $f(x_k+\alpha s_{i,k})$ with the error term on Lagrange form: For $\theta \in [0,1]$ 
\begin{align*}
    & f(x_k+\alpha s_{i,k}) \\
    & = f(x_k) + \alpha g(x_k)^T s_{i,k} + \frac{\alpha^2}{2} s_{i,k} ^T \nabla^2 H(x_k+\theta \alpha s_{i,k}) s_{i,k}.
\end{align*}
Therefore, 
\begin{align*}
    \left |{\Delta_{\alpha s_{i,k}}f(x_k)}/{\alpha} -g(x_k)^T s_{i,k} \right|  
    &= \frac{\alpha}{2} \left | s_{i,k} ^T H(x_k+\theta \alpha s_{i,k}) s_{i,k} \right | \\
    &\leq \alpha \frac{L_1}{2},
\end{align*}
which implies a bound for each component of $e_k$. We can conclude that $\| e_k \|_2 \leq \sqrt{m}\alpha {L_1}/{2}$.

Next, denote the third derivative tensor of $f(x)$ by $f'''(x)$, where
\begin{equation*}
    [f'''(x)]_{ijk} = \frac{\partial}{\partial [x]_i}\frac{\partial}{\partial [x]_j}\frac{\partial}{\partial [x]_k} f(x).
\end{equation*}
For $u, v, w \in\mathbb{R}^d$, let 
$$
f'''(x)[u][v][w] = \sum_{i=1}^n \sum_{j=1}^n \sum_{k=1}^n [f'''(x)]_{ijk} [u]_i [v]_j [w]_k.
$$
The second-order Taylor expansion of $f(x_k+\alpha u)$ is
\begin{align*}
    f(x_k + \alpha u) & = f(x_k) + \alpha g(x_k)^T u + \frac{\alpha^2}{2} u^T H(x_k) u \\ & + \frac{\alpha^3}{6} f'''(x_k + \theta \alpha u)[u][u][u], \ \theta \in [0,1]. 
\end{align*}
The $L_2$-Lipschitz continuity assumption on $H(x)$ implies that $f'''(x)[u][v][w]$ is bounded by  $L_2\|u\|_2\|v\|_2\|w\|_2$. Thus, for $\theta_1,\theta_2,\theta_3 \in [0,1]$
\begin{equation*}
\begin{aligned}
    & \left |\Delta_{\alpha s_{i,k}}\Delta_{\alpha s_{j,k}} f(x_k)/{\alpha^2}- s_{i,k}^T H(x_k) s_{j,k} \right |  \\
    & = \frac{\alpha}{6} | f'''(c^1_k)[s_{i,k} +s_{j,k}][s_{i,k}+s_{j,k}][s_{i,k}+s_{j,k}] \\
    & \hspace{0.5cm}- f'''(c^2_k)[s_{i,k}][s_{i,k}][s_{i,k}] - f'''(c^3_k)[s_{j,k}][s_{j,k}][s_{j,k}]| \\
    & \leq \frac{5}{3} \alpha L_2, 
\end{aligned}
\end{equation*}
where $c^1_k = x_k + \theta_1 \alpha (s_{i,k}+s_{j,k})$, $c^2_k = x_k + \theta_2 \alpha s_{i,k}$ and $c^3_k = x_k + \theta_3 \alpha s_{j,k}$. This gives a bound on each component of $E_k$.
To finish the analysis, we invoke the following theorem:

\begin{theorem}[Ger\v{s}gorin theorem, \cite{horn2013matrix}]\label{thm:Gershgorin}
Let $A = [a_{ij}] \in M_n$ and let $R_i'(A) = \sum_{j \neq i} |a_{ij}|, \ i \in \{1,...,n\}$ denote the deleted absolute row sums of $A$. Consider the $n$ Ger\v{s}gorin discs $\{z \in \mathbf{C}:|z-a_i|\leq R_i'(A)\}$. The eigenvalues of $A$ are in the union of the Ger\v{s}gorin discs.
\end{theorem}
Finally, by applying Theorem~\ref{thm:Gershgorin} on $E_k^TE_k$, we have  $\rho(E_k^TE_k) \leq ({25}/{9})L_2^2m^2\alpha^2$. We can hence conclude that $\|E_k\|_2 = \sqrt{\rho(E_k^TE_k)} \leq ({5}/{3}) L_2m\alpha$. 

\end{proof}

\section{Proof of Theorem 1}
The RSN method chooses $x_{k+1}$ by minimizing the right hand side of \eqref{eqn:gammabound} with respect to $x$ and subject to the condition that $x - x_k$ is a linear combination of the columns of $S_k$. This constraint can directly be taken into account by the following change of variables to an $m$-dimensional variable vector $\lambda$:
\begin{equation*}
    x = x_k + \gamma S_k \lambda.
\end{equation*}
Denote $T_k(\lambda)$ as the upper bound of~\eqref{eqn:gammabound}, i.e.
\begin{equation*}
    T_k(\lambda) = f(x_k) + \gamma g(x_k)^TS_k \lambda + \frac{\gamma}{2} \|\lambda\|_{S_k^T H(x_k) S_k}^2.
\end{equation*}
Here, $\lambda_k = -(S_k^T H_k S_k)^\dagger S_k^T g(x_k)$ from Eq. \eqref{eqn:updaterule} is the $\lambda$ minimizing $T_k(\lambda)$.

Since the ZO-RSN method only accesses approximations of the sketched gradient and Hessian $\tilde{g}_{S_k}(x_k)$ and $\tilde{H}_{S_k}(x_k)$, it tries to minimize $\tilde{T}_k(\lambda)$, where
\begin{equation}\label{eqn:approx_gamma_bound}
    \tilde{T}_k(\lambda) = f(x_k) + \gamma \tilde{g}_{S_k}(x_k)^T\lambda + \frac{\gamma}{2}\|\lambda\|_{\tilde{H}_{S_k}(x_k)}^2. 
\end{equation}
Let $\tilde{\lambda}_k$ be the minimizer of $\tilde{T}_k(\lambda)$. By setting $x_{k+1} = x_k + \gamma S_k^T \tilde{\lambda}_k$, we get
\begin{equation}\label{eqn:ZO_RSN_upper_bound_no_alpha}
\begin{aligned}
    f(x_{k+1}) 
    & \leq T(\tilde{\lambda}_k) = T(\lambda_k) + T(\tilde{\lambda}_k) - T(\lambda_k) \\
    & = f(x_k) - \frac{\gamma}{2} \|g(x_k)\|^2_{S_k(S_k^TH(x_k)S_k)^{\dagger}S_k^T} \\ 
    & \hspace{0.5cm}+ \gamma g(x_k)^TS_k(\tilde{\lambda}_k - \lambda_k)  \\
    &\hspace{0.5cm}+ \dfrac{\gamma}{2}(\|\tilde{\lambda}_k\|_{S_k^TH(x_k)S_k}^2 - \|\lambda_k\|_{S_k^TH(x_k)S_k}^2) \\
    & = f(x_k) - \frac{\gamma}{2} \|g(x_k)\|_{S_k(S_k^TH(x_k)S_k)^{-1}S_k^T}^2 \\
    &\hspace{0.5cm}+ \gamma g(x_k)^TS_k(\tilde{\lambda}_k - \lambda_k) \\ 
    &\hspace{0.5cm} + \dfrac{\gamma}{2}(\tilde{\lambda}_k + \lambda_k)^TS_k^TH(x_k)S_k(\tilde{\lambda}_k-\lambda_k). 
\end{aligned}
\end{equation}
To complete the proof, we need to determine upper-bounds for $\|\lambda_k\|_2$, $\|\tilde{\lambda}_k - \lambda_k\|_2$ and $\|\lambda_k + \tilde{\lambda}_k\|_2$.
We first prove the upper-bound for $\|\lambda_k\|_2$.
Since $f(x)$ is $L_1$-smooth and $\mu$-strongly convex, $\mu I \preceq S_k^TH(x_k)S_k \preceq L_1 I$.
By the fact that $ S_k^TH(x_k)S_k \lambda_k = - S_k^T g(x_k)$, 
\begin{equation}\label{eqn:lambdabound}
    \|\lambda_k\|_2 = \|(S_k^T H(x_k) S_k)^{-1} S_k^T g(x_k) \|_2 \leq \|g(x_k)\|_2/\mu.
\end{equation}
We next find the upper-bound for $\|\tilde{\lambda}_k - \lambda_k\|_2$.
Define $e_k = \tilde{g}_{S_k}(x_k) - S_k^Tg(x_k)$ and $E_k =\tilde{H}_{S_k}(x_k) - S_k^T H(x_k) S_k$.
If $\alpha \leq 3\mu/(10L_2 m)$, then $\|E_k\|_2 \|(S_k^T H(x_k) S_k)^{-1}\|_2 \leq {1}/{2}$.
We can then use the following lemma:
\begin{lemma}[\cite{horn2013matrix}]\label{lemma:perturbation}
Let $A \in M_n$ be non-singular with condition number $\kappa(A)$, let $b,\Delta b \in \mathbb{R}^n$ and let $\Delta A \in M_n$ be such that $\|A^{-1}\|_2 \|\Delta A\|_2 < 1$. If $x = A^{-1} b$, there exits a $\Delta x$ such that 
\begin{equation*}
    (A+\Delta A) (x + \Delta x) = b + \Delta b,
\end{equation*}
and 
\begin{equation*}
    \|\Delta x\|_2 \leq \dfrac{\|A^{-1}\|_2}{1-\kappa(A) \frac{\|\Delta A\|_2}{\|A\|_2}}(\|\Delta b\|_2 + \|\Delta A\|_2 \|x\|_2),
\end{equation*}
\end{lemma}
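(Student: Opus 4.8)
The plan is to construct $\Delta x$ explicitly and then control its norm through the Neumann series. First I would use the hypothesis $\|A^{-1}\|_2\|\Delta A\|_2 < 1$ to conclude that $A+\Delta A = A(I + A^{-1}\Delta A)$ is non-singular: sub-multiplicativity of the operator norm gives $\|A^{-1}\Delta A\|_2 \leq \|A^{-1}\|_2\|\Delta A\|_2 < 1$, so $I + A^{-1}\Delta A$ is invertible by the Neumann series, and hence so is $A+\Delta A$. This lets me simply define $x + \Delta x := (A+\Delta A)^{-1}(b+\Delta b)$, i.e. $\Delta x := (A+\Delta A)^{-1}(b+\Delta b) - x$, which by construction satisfies the required identity $(A+\Delta A)(x+\Delta x) = b+\Delta b$, establishing existence.

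Next I would derive a closed form for $\Delta x$ better suited to norm estimates. Subtracting $Ax = b$ from $(A+\Delta A)(x+\Delta x) = b+\Delta b$ and expanding gives $A\Delta x + \Delta A\,x + \Delta A\,\Delta x = \Delta b$, that is $(A+\Delta A)\Delta x = \Delta b - \Delta A\,x$, so $\Delta x = (A+\Delta A)^{-1}(\Delta b - \Delta A\,x)$. Taking the Euclidean operator norm and applying the triangle inequality and sub-multiplicativity yields $\|\Delta x\|_2 \leq \|(A+\Delta A)^{-1}\|_2\,(\|\Delta b\|_2 + \|\Delta A\|_2\|x\|_2)$.

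It then remains to bound $\|(A+\Delta A)^{-1}\|_2$. Writing $(A+\Delta A)^{-1} = (I + A^{-1}\Delta A)^{-1}A^{-1}$ and invoking the standard Neumann bound $\|(I+B)^{-1}\|_2 \leq (1-\|B\|_2)^{-1}$ for $\|B\|_2 < 1$ with $B = A^{-1}\Delta A$, I get $\|(A+\Delta A)^{-1}\|_2 \leq \|A^{-1}\|_2 / (1 - \|A^{-1}\|_2\|\Delta A\|_2)$. Finally I would rewrite the denominator using $\kappa(A) = \|A\|_2\|A^{-1}\|_2$, so that $\|A^{-1}\|_2\|\Delta A\|_2 = \kappa(A)\,\|\Delta A\|_2/\|A\|_2$, and substitute back to recover exactly the claimed inequality.

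There is no genuine analytic obstacle here; this is a textbook first-order perturbation estimate. The only points that need a little care are (i) deducing invertibility of $A+\Delta A$ from the scalar hypothesis on $\|A^{-1}\|_2\|\Delta A\|_2$ rather than on $\|A^{-1}\Delta A\|_2$ directly, which is exactly what sub-multiplicativity provides, and (ii) applying the Neumann norm bound to the factor $I + A^{-1}\Delta A$ (so that a clean $\|A^{-1}\|_2$ emerges) rather than to some other factorization of $A+\Delta A$. Everything else is routine bookkeeping with operator norms.
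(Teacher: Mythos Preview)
Your argument is correct and is the standard Neumann-series perturbation proof one finds in the cited reference. Note, however, that the paper does not actually prove this lemma: it is quoted from \cite{horn2013matrix} and used as a black box inside the proof of Theorem~\ref{thm:Convergence}, so there is no ``paper's own proof'' to compare against---your write-up simply supplies the textbook derivation the paper omits.
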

By 
Lemma~\ref{lemma:perturbation}, and by the fact that $$\kappa(S_k^TH(x_k)S_k)/\|S_k^TH(x_k)S_k\|_2 = \|(S_k^TH(x_k)S_k)^{-1}\|_2,$$
we have
\begin{equation}\label{eqn:lambda_perturbation}
\begin{aligned}
    & \|\tilde{\lambda}_k - \lambda_k\|_2 \\
    & \leq \dfrac{\|(S_k^TH(x_k)S_k)^{-1}\|_2}{1-\kappa(S_k^TH(x_k)S_k) \frac{\|E_k\|_2}{\| S_k^TH(x_k)S_k\|_2}}(\|e_k\|_2 + \|E_k\|_2 \|\lambda_k\|_2)\\
    &\leq \frac{1}{\mu}(\sqrt{m} L_1 + \frac{10}{3} m L_2 \|\lambda_k\|_2)\alpha \\
    &\leq \frac{1}{\mu}(\sqrt{m} L_1 + m \frac{10 L_2}{3 \mu} \|g(x_k)\|_2)\alpha.
\end{aligned}
\end{equation}
We finally can prove the upper-bound for $\|\lambda_k + \tilde{\lambda}_k\|_2$:
\begin{align}
    \|\lambda_k + \tilde{\lambda}_k \|_2 
    &\leq 2\|\lambda_k \|_2 + \|\tilde{\lambda}_k - \lambda_k\|_2 \nonumber\\
    &\leq \frac{1}{\mu} \left (\sqrt{m} L_1 + \left( 2 + m \frac{10 L_2}{3 \mu} \right) \|g(x_k)\|_2 \right)\alpha. \label{eqn:lambdasum}
\end{align}
Next, plugging in inequalities \eqref{eqn:lambdabound}, \eqref{eqn:lambda_perturbation} and \eqref{eqn:lambdasum} into \eqref{eqn:ZO_RSN_upper_bound_no_alpha}, and then 
using the fact that $\|g(x_k)\|_2 \leq {(1 + \|g(x_k)\|_2^2)}/{2}$
\begin{align}
    f(x_{k+1}) 
    & \leq f(x_k) - \frac{\gamma}{2} \|g(x_k)\|_{S_k(S_k^TH(x_k)S_k)^{-1}S_k^T}^2 \nonumber \\
    &\hspace{0.5cm} + \alpha(C_1+C_2\alpha + \|g(x_k)\|_2^2 (C_1+C_3, \alpha))\label{eqn:recursion bound extended}
\end{align}
where $C_1 = \gamma(\sqrt{m}L + B)/(2\mu)$, $C_2 = \gamma L_1^2 [m + \sqrt{m}(1+B)]/(2\mu^2)$, $C_3 = \gamma L_1 [ \sqrt{m}L_1(1+B) + B(2+B) ] /(2\mu^2)$ and $B = 10mL_2/(3\mu)$.
Taking the expectation with respect to $x_k$ on both sides of Inequality~\eqref{eqn:recursion bound extended}, we have 
\begin{align}
    \mathbb{E}[f(x_{k+1})|x_k] 
    &\leq f(x_k) - \frac{\gamma}{2} \|g(x_k)\|^2_{G(x_k)}  \nonumber\\
    &\hspace{0.5cm}+ \alpha(C_1+C_2\alpha + \|g(x_k)\|_2^2 (C_1+C_3 \alpha)), \label{eqn:Expectation_bound}
\end{align}
where $G(x) = \mathbb{E}_{S_k \sim \mathcal{D}}[S_k(S_k^TH(x)S_k)^{-1}S_k^T]$. 

To prove the linear convergence of the ZO-RSN method from Eq. \eqref{eqn:Expectation_bound}, we need to bound $\|g(x_k)\|_2^2$ and $\|g(x_k)\|^2_{G(x_k)}$.
We first prove the upper bound for $\|g(x_k)\|_2$ by the $L_1$-smoothness assumption of $f(x)$, i.e. 
\begin{equation*}
    f(x) \leq f(y) + g(y)^T(x-y) + \frac{L_1}{2}\|x-y\|_2^2.
\end{equation*}
Setting $y=x_k$ and minimizing both sides with respect to $x$ separately results in 
\begin{equation}\label{eqn:Lsmoothbound}
    \|g(x_k)\|_2^2 \leq 2L_1(f(x_k) - f^\ast).
\end{equation}
We next show the lower bound for $\|g(x_k)\|^2_{G(x_k)}$. 
If $H(x_k)$ is non-singular, then
\begin{align*}
    \|g(x_k)\|_{G(x_k)}^2 
    & = \|H(x_k)^{\frac{1}{2}}H(x_k)^{-\frac{1}{2}} g(x_k)\|_{G(x_k)}^2 \\
    &\geq \rho \|g(x_k)\|_{H(x_k)^{-1}}^2.
\end{align*}
Setting $y=x_k$ in~\eqref{eqn:muhatbound}  and minimizing both sides of the equation with respect to $x$ separately gives
\begin{equation*}
    f^\ast \geq f(x_k) - \frac{1}{2\hat \mu} \|g(x_k)\|_{H(x_k)^{-1}}^2.
\end{equation*}
Therefore, 
\begin{equation}\label{eqn:strong_pesudoconvexity_bound}
     2 \rho \hat \mu (f(x_k)-f^\ast) \leq \rho \|g(x_k)\|_{H(x_k)^{-1}}^2 \leq \|g(x_k)\|_{G(x_k)}^2.
\end{equation}
Next, by plugging \eqref{eqn:Lsmoothbound} and \eqref{eqn:strong_pesudoconvexity_bound} into \eqref{eqn:Expectation_bound}, then by subtracting $f^\ast$ from both sides of the inequality, and after that by taking the total expectation, we get
\begin{equation*}
   V_{k+1} \leq [1-\rho \hat \mu \gamma  +  \alpha(C_1+\alpha C_3)] V_k + \alpha(C_1+C_2\alpha).
\end{equation*}
where $V_k = \mathbf{E}[f(x_{k})-f^\ast]$. 

If $\alpha$ satisfies $\alpha C_1 + \alpha^2 C_3 < \rho \hat \mu \gamma$, then by  applying the inequality recursively and by using the fact $\sum_{l=0}^{k-1} \beta^l \leq \sum_{l=0}^{\infty} \beta^l = 1/(1-\beta)$ for $\beta\in(0,1)$
%
\begin{equation}\label{eqn:powerbound}
V_{k}\leq (1-\rho \hat \mu \gamma +  \alpha(C_1+\alpha C_3))^kV_0+ \dfrac{\alpha(C_1+C_2\alpha)}{\rho \hat \mu \gamma -\alpha C_1 - \alpha^2 C_3}. 
\end{equation}
If 
$\alpha$ also satisfies 
\begin{equation}\label{eqn:deltatimesepsilon}
    \dfrac{\alpha(C_1+C_2\alpha)}{\rho \hat \mu \gamma -\alpha C_1 - \alpha^2 C_3} \leq \delta \varepsilon,
\end{equation}
where $\delta \in (0,1)$ and $\varepsilon$ is an expected sub-optimality, then the lower bound on the number of iterations follows. 

\section{Proof of Corollary 4.1}
To prove the result, we need to quantify $\rho$. This can be done by using the following lemma: 
\begin{lemma}{\cite{gower2019rsn}}\label{lemma:exactness}
If for all $x_k \in \mathbb{R}^n$ it holds with probability 1 that $Null(S_k^TH(x_k)S_k) = Null(S_k)$ and  $Range(H(x_k)) \subset Range(\mathbb{E}_{S_k \sim \mathcal{D}}[S_k^TS_k])$, then $\rho (x_k) = \lambda^+_{min}(\mathbb{E}_{S_k \sim \mathcal{D}}[\hat P(x_k)])$ which is positive, where
\begin{equation}
    \hat P(x_k) := H^{1/2}(x_k) S_k (S_k^TH(x_k)S_k)^{\dagger} S_k^T H^{1/2}(x_k).
\end{equation}
\end{lemma}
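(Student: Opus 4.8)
The plan is to reduce the claim to a spectral statement about $\mathbb{E}_{S_k\sim\mathcal{D}}[\hat P(x_k)]$ and then to locate its null space precisely using the two hypotheses. Writing $A_k=H(x_k)^{1/2}S_k$, linearity of expectation gives the algebraic identity
\begin{equation*}
H(x_k)^{1/2}G(x_k)H(x_k)^{1/2}=\mathbb{E}_{S_k\sim\mathcal{D}}\!\left[H(x_k)^{1/2}S_k(S_k^TH(x_k)S_k)^{\dagger}S_k^TH(x_k)^{1/2}\right]=\mathbb{E}_{S_k\sim\mathcal{D}}[\hat P(x_k)],
\end{equation*}
so that $\rho(x_k)$ is just a minimal Rayleigh quotient of $\mathbb{E}_{S_k\sim\mathcal{D}}[\hat P(x_k)]$, and the whole proof amounts to understanding the spectrum of this expected matrix.

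First I would show that each realisation $\hat P(x_k)$ is an orthogonal projection. It is symmetric because $S_k^TH(x_k)S_k$ is symmetric (hence so is its pseudoinverse); and since $S_k^TH(x_k)S_k=A_k^TA_k$, the standard identity $A_k^{\dagger}=(A_k^TA_k)^{\dagger}A_k^T$ lets me rewrite $\hat P(x_k)=A_k(A_k^TA_k)^{\dagger}A_k^T=A_kA_k^{\dagger}$, which is exactly the orthogonal projector onto $\mathrm{Range}(A_k)=\mathrm{Range}(H(x_k)^{1/2}S_k)$. The null-space hypothesis $\mathrm{Null}(S_k^TH(x_k)S_k)=\mathrm{Null}(S_k)$ ensures this projector faithfully represents $\mathrm{Range}(S_k)$, i.e. that $H(x_k)^{1/2}$ collapses no direction of $\mathrm{Range}(S_k)$. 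Hence $\hat P(x_k)\succeq 0$ has eigenvalues in $\{0,1\}$, so $\mathbb{E}[\hat P(x_k)]$ is symmetric positive semidefinite with spectrum in $[0,1]$, and in particular $\rho(x_k)\ge 0$.

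Next I would pin down $\mathrm{Null}(\mathbb{E}[\hat P(x_k)])$. Because $\hat P(x_k)\succeq 0$, for any $v$ we have $v^T\mathbb{E}[\hat P(x_k)]v=\mathbb{E}[\,\|\hat P(x_k)v\|_2^2\,]$, which vanishes if and only if $\hat P(x_k)v=0$ almost surely, i.e. $S_k^TH(x_k)^{1/2}v=0$ almost surely. Setting $w=H(x_k)^{1/2}v$, this reads $w^TS_kS_k^Tw=0$ almost surely, hence $w^T\mathbb{E}[S_kS_k^T]w=0$, so $w\in\mathrm{Null}(\mathbb{E}[S_kS_k^T])=\mathrm{Range}(\mathbb{E}[S_kS_k^T])^{\perp}$. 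The range hypothesis $\mathrm{Range}(H(x_k))\subseteq\mathrm{Range}(\mathbb{E}[S_kS_k^T])$ then forces $w\perp\mathrm{Range}(H(x_k))$, i.e. $w\in\mathrm{Null}(H(x_k))$; combined with $w=H(x_k)^{1/2}v\in\mathrm{Range}(H(x_k)^{1/2})=\mathrm{Range}(H(x_k))=\mathrm{Null}(H(x_k))^{\perp}$, this gives $w=0$ and therefore $H(x_k)^{1/2}v=0$. Thus the only directions in the null space of $\mathbb{E}[\hat P(x_k)]$ are those already annihilated by $H(x_k)^{1/2}$, so $\mathrm{Null}(\mathbb{E}[\hat P(x_k)])=\mathrm{Null}(H(x_k))$.

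Finally I would assemble the conclusion: restricted to the complementary subspace $\mathrm{Range}(H(x_k))$ the matrix $\mathbb{E}[\hat P(x_k)]$ is positive definite, so its smallest eigenvalue there is strictly positive and equals $\lambda^+_{\min}(\mathbb{E}[\hat P(x_k)])$. Since the quadratic form $v^TH(x_k)^{1/2}G(x_k)H(x_k)^{1/2}v$ probes $\mathbb{E}[\hat P(x_k)]$ only through $H(x_k)^{1/2}v$, the minimisation defining $\rho(x_k)$ effectively ranges over $\mathrm{Range}(H(x_k))$, and hence $\rho(x_k)=\lambda^+_{\min}(\mathbb{E}[\hat P(x_k)])>0$. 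I expect the main obstacle to be the third step: legitimately reducing the quadratic form to the almost-sure condition $S_k^TH(x_k)^{1/2}v=0$ (which is what the positive semidefiniteness of $\hat P(x_k)$ buys) and then chaining the two hypotheses so that the null and range spaces of $H(x_k)$ cancel to leave $w=0$; it is precisely the range hypothesis that rules out a spurious zero eigenvalue of $\mathbb{E}[\hat P(x_k)]$ on $\mathrm{Range}(H(x_k))$.
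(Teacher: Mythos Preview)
The paper does not provide its own proof of this lemma: it is quoted verbatim from \cite{gower2019rsn} and used as a black box in the proof of Corollary~4.1. So there is no in-paper argument to compare against; your proposal stands on its own.

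Your argument is the standard one and is correct in the setting of this paper. The identification $H(x_k)^{1/2}G(x_k)H(x_k)^{1/2}=\mathbb{E}[\hat P(x_k)]$, the fact that $\hat P(x_k)=A_kA_k^{\dagger}$ is an orthogonal projector, and the computation of $\mathrm{Null}(\mathbb{E}[\hat P(x_k)])$ via the range hypothesis are all right. One remark: the first hypothesis $\mathrm{Null}(S_k^TH(x_k)S_k)=\mathrm{Null}(S_k)$ is invoked only informally in your second paragraph and is not actually used in the chain of implications that identifies the null space; the argument goes through from the second hypothesis alone. This is not a flaw in your proof, but you should be aware that the role you assign to that hypothesis (``faithfully represents $\mathrm{Range}(S_k)$'') is decorative here.

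The only genuine soft spot is your last paragraph. You write that ``the minimisation defining $\rho(x_k)$ effectively ranges over $\mathrm{Range}(H(x_k))$'', but with $\rho(x_k)$ defined in Theorem~1 as an unrestricted minimum over $v\in\mathbb{R}^n$, this is false whenever $H(x_k)$ is singular: taking $v\in\mathrm{Null}(H(x_k))\setminus\{0\}$ makes the numerator zero while $\|v\|_2\neq 0$, so $\rho(x_k)=0\neq\lambda_{\min}^+(\mathbb{E}[\hat P(x_k)])$. In this paper the issue never arises because $f$ is $\mu$-strongly convex, hence $H(x_k)\succ 0$, $\mathrm{Null}(H(x_k))=\{0\}$, and $\lambda_{\min}^+=\lambda_{\min}$. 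If you want the statement at the level of generality suggested by the pseudoinverse and by $\lambda_{\min}^+$, you would need the definition of $\rho(x_k)$ restricted to $v\in\mathrm{Range}(H(x_k))$, which is how it is set up in \cite{gower2019rsn}; with the definition as printed here, simply note that $H(x_k)\succ 0$ and conclude $\rho(x_k)=\lambda_{\min}(\mathbb{E}[\hat P(x_k)])>0$ directly.
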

From this lemma, we can quantify $\rho$  by considering the cases when the columns of $S_k$ are chosen randomly without replacement from a basis of orthonormal eigenvectors of $H(x_k)$.
Let $\tilde{\Lambda}_{S_k}$ be an $m \times m$ diagonal matrix such that the eigenvalue corresponding to column $i$ of $S_k$ is the $i^{\text{th}}$ element on the diagonal of $\tilde{\Lambda}_{S_k}$ and let its square root be $\tilde{\Lambda}_{S_k}^{\frac{1}{2}}$. Then, 
\begin{align*}
    \hat P(x_k) 
    &= H^{1/2}(x_k) S_k (S_k^TH(x_k)S_k)^{-1} S_k^T H^{1/2}(x_k) \\
    & = S_k \tilde{\Lambda}_{S_k}^{\frac{1}{2}} (\tilde{\Lambda}_{S_k}^{\frac{1}{2}}S_k^TS_k\tilde{\Lambda}_{S_k}^{\frac{1}{2}})^{-1} \tilde{\Lambda}_{S_k}^{\frac{1}{2}}S_k^T = S_k S_k^T.
\end{align*}
The eigenvectors of all realizations of $\hat P(x_k)$ are the eigenvectors of $H(x_k)$, with eigenvalues $1$ for each vector that is among the columns of $S_k$ and eigenvalues 0 for the other vectors. Therefore, the orthonormal eigenvectors of $H(x_k)$ are also the eigenvectors of $\mathbb{E}_{S_k \sim \mathcal{D}}[\hat P(x_k)]$. 
%
Since the probability that this eigenvector is among the columns of $S_k$ is $m/n$, $v^T \mathbb{E}_{S_k \sim \mathcal{D}}[\hat P(x_k)] v = m/n$ for any eigenvector $v$. 
Thus, we can prove that $\rho = m/n$.

From Theorem \ref{thm:Convergence}, the iteration complexity bound can be approximated in Eq.\eqref{eqn:weaker_bound}. If we choose $\gamma = {1}/{\hat L}$  and some $\sigma \in (0,1)$ such that $\alpha = (\sqrt{C_1^2/4 + (1-\sigma)\rho \hat \mu \gamma} -{C_1}/{2} )/C_2$, then 
$$
\rho \hat \mu \gamma -\alpha C_1 - \alpha^2 C_3 = {\sigma m \hat \mu}/{(n \hat L)}. 
$$
Plugging this expression into Eq.\eqref{eqn:weaker_bound}, we complete the proof.

\end{document}